\newtheorem{theorem}{Theorem}[section]
\newtheorem{lemma}[theorem]{Lemma}
\newtheorem{proposition}[theorem]{Proposition}
\newtheorem{corollary}[theorem]{Corollary}
\theoremstyle{definition}
\newtheorem*{ack}{Acknowledgments}
\theoremstyle{remark}
\newtheorem{remark}[theorem]{Remark}
\numberwithin{equation}{section}
\DeclarePairedDelimiter{\parens}{(}{)}
\DeclarePairedDelimiter{\inner}{\langle}{\rangle}
\newcommand*{\op}[1]{\operatorname{#1}}
\newcommand*{\R}{\mathbb{R}}
\numberwithin{equation}{section}
\newcounter{rom}
\renewcommand{\therom}{(\roman{rom})}
{\end{list}}
\title[anisotropic capillary hypersurfaces in a wedge]
    {Anisotropic capillary hypersurfaces in a wedge}
\author{Hui Ma}
\address{Department of Mathematical Sciences, Tsinghua University,
Beijing 100084, P.R. China} 
\email{ma-h@tsinghua.edu.cn}
\author{Jiaxu Ma}
\email{mjx22@mails.tsinghua.edu.cn}
\author{Mingxuan Yang}
\address{Academy of Mathematics and Systems Science, the Chinese Academy of Sciences, Beijing 100190, China}
\email{ymx20@amss.ac.cn}
\date{}
\begin{document}

\begin{abstract}
We investigate anisotropic capillary hypersurfaces within a wedge in Euclidean space.
In this study, we generalize the Minkowski norm \(F\), traditionally employed to define the anisotropic surface energy, to a gauge on the unit sphere \(S^n\).
This generalization helps to illuminate a significant relationship between capillary hypersurfaces and hypersurfaces with free boundary.
Our main results include new Minkowski formulae and a Heintze-Karcher type inequality.
As an application, we prove an Alexandrov-type theorem, thereby extending the known results to the anisotropic setting.
\end{abstract}

\keywords {Alexandrov Theorem, Heintze-Karcher inequality, constant mean curvature, capillary hypersurface, Wulff shape}

\subjclass[2020]{53C24, 53C42, 53C40}

\maketitle
  
\section{Introduction}\label{sec:intro}

Capillary surfaces model the shape of incompressible liquid droplets supported on a solid substrate in microgravity conditions.
The contact angle depends on the materials involved (see \cite{ConcusFinn, Finn}).

Modern studies of capillary surfaces can be traced back to the pioneering work of Young, Laplace, and Gauss in the early 19th century.
Their research established the foundation for understanding how immiscible materials interact at interfaces,
aiming to minimize surface energy under given physical conditions. 

For solid-state materials or liquid crystals, even when the surface material appears uniform, the classical isotropic surface energy must be replaced by an anisotropic surface energy to accurately describe the situation.
This shift is critical for understanding numerous phenomena in materials science, physics, and engineering, such as alloy casting and thin film growth.

The investigation of anisotropic capillary hypersurfaces within various domains, such as upper half-spaces, balls, cylinders, cones, and wedges, is of significant importance. Recently, there have been many results in this field, including stability problems, overdetermined problems and the regularity of minimizing capillary hypersurfaces (refer to \cite{CEL, CK, GWX, Koiso, LX, MYY, Rosales, WX1} and the references therein).

In wedge domains, 
liquid droplets are stabilized at certain Wulff shapes \cite{Koiso, Motoki}.
This behavior has practical applications, such as in the distribution of liquid propellants, guiding the liquids to wedge-shaped exit positions even in low-gravity environments, as discussed in \cite{McCuan}.

This paper focuses on the study of anisotropic capillary hypersurfaces in wedge domains, particularly addressing an Alexandrov-type theorem for anisotropic capillary hypersurfaces intersecting with the two hyperplanes forming the wedge domain.

%%%%%

Recall that Alexandrov's Soap Bubble Theorem \cite{Alex1} is a fundamental result in differential geometry, stating that any closed embedded hypersurface in \(\mathbb{R}^{n+1}\) with constant mean curvature must be a round sphere. 
It establishes important links between the mean curvature and the rigidity of a closed hypersurface. 
The various methods to prove the Alexandrov theorem bring distinct perspectives and significant tools. For instance,
Alexandrov's approach involves reflecting through moving planes based on the maximum principle; Reilly \cite{Reilly} presented a different proof via his famous integration formula; Montiel and Ros  \cite{Montiel-Ros} combined the Minkowski formula and the Heintze-Karcher inequality to offer a geometric proof; The proof by Hajazi, Montiel, and Zhang \cite{HMZ} uses a spinorial Reilly-type formula. Brendle \cite{brendle} achieved a significant generalized Alexandrov theorem in warped product manifolds by employing the normal geodesic flow with respect to a conformal metric.

About the Alexandrov theorem on capillary hypersurfaces,
Wente \cite{Wente} initially studied the case of half-space \(\mathbb{R}^{n+1}_{+}\).
Park \cite{Park} later obtained a similar result for the ring-type spanner
in a wedge by Alexandrov’s reflection argument. 
Subsequently, Choe and Park \cite{CP} and López \cite{Lopez} considered the theorem in convex cone and wedge by the method of Reilly formula, respectively. 
Pyo \cite{Pyo} got rigidity theorems of hypersurfaces with free boundary in a wedge in a space form.
Jia, Wang, Xia and Zhang \cite{JWXZ} further obtained the Alexandrov theorem for capillary hypersurfaces in a wedge.
More recently, Wang and Xia \cite{WX} introduced a novel perspective on capillary hypersurfaces in a unit ball by reducing them to free boundary cases via a special Finsler metric, thereby leading to the Alexandrov theorem.

For the anisotropic setting, He, Li, Ma and Ge \cite{HLMG} first proved the anisotropic version of the Alexandrov theorem for closed hypersurfaces in Euclidean space. Recently, Jia, Wang, Xia and Zhang \cite{JWXZ2} extended Wente's result to anisotropic capillary hypersurfaces in the half-space \(\mathbb{R}^{n+1}_{+}\).

Concerning anisotropic capillary hypersurfaces in a classical wedge, we first introduce the setup.
Let \(\bm{n}_1, \bm{n}_2\) be two linearly independent unit vectors in \(\R^{n+1}\).
A wedge $\mathbf{W}$ determined by \(\bm{n}_1,\bm{n}_2\) is defined to be the set
\[
    \{x\in\R^{n+1}\mid\inner{x,\bm{n}_i}<0,\, i=1,2\}.
\]
This is an open region in \(\mathbb{R}^{n+1}\) bounded by two half-hyperplanes.
The closure \(\overline{\mathbf{W}}\) is a smooth manifold with corners.
The boundary \(\partial \mathbf{W}\) consists of two open half-hyperplanes \(P_1,P_2\)
and a codimension $2$ linear subspace \(L\) in \(\mathbb{R}^{n+1}\).
Here \(L\) is the boundary of \(P_1\) and \(P_2\).

Consider \(\Sigma\subset\overline{\mathbf{W}}\) as a smooth, compact hypersurface within a classical wedge, with \(\partial\Sigma\subset\partial\mathbf{W}\), and let \(\Omega\) denote the domain enclosed by \(\Sigma\) and \(\partial \mathbf{W}\).
Following \cite{Koiso}, the energy functional is defined as
\[
    E(\Sigma) = \int_{\Sigma} F(\bm{\nu}) \, \mathrm{d} A + \sum_{i=1}^2\omega_{0}^i \left|\partial \Omega \cap P_{i}\right|,
\]
where \(\bm{\nu}\) is the unit outward normal vector field of \(\Sigma\) and \(F\) is a \textit{gauge}, i.e.
a nonnegative convex function defined on \(\mathbb{R}^{n+1}\) which is positively homogeneous of degree one (see \cite[(1.6)]{CRS}).
The first term \(\int_{\Sigma} F(\bm{\nu}) \,  \mathrm{d} A\) represents the anisotropic surface energy.
The second term $\omega_0^i \left|\partial \Omega \cap P_{i}\right|$ represents the wetting energy, where \(\omega_{0}^i\in\mathbb{R}\) is a given constant.

The gauge \(F\) is the support function of a closed convex body \(K\subset\R^{n+1}\) satisfying \(\bm{0}\in K\).
In some situations, it is useful to consider \(K\) instead of \(F\). It is worth noting that, in previous works on anisotropic hypersurfaces, \(F\) is usually taken to be positive.
We allow \(F\) to \textit{degenerate} on a ray \(\ell\) starting at \(\bm{0}\), i.e. \(F(x)=0\) for \(x\in\ell\).
When focusing on \(K\), the degeneracy of \(F\) is equivalent to \(\bm{0}\in\partial K\).
In the proof of our main theorems, it is necessary to deal with degenerate gauges separately.

Constrained by a fixed volume, we examine the first variation of the energy functional $E$.
For a family of hypersurfaces $\left\{\Sigma_t\right\}$ that vary smoothly,
with boundaries $\partial \Sigma_t$ moving freely on the boundary $\partial \mathbf{W}$,
and according to a variational vector field $Y$
such that $\left.Y\right|_{\partial \Sigma} \in T\left(\partial \mathbf{W}\right)$,
the first variation formula of $E$ is given by 
\[
    \left.\frac{\mathrm{d}}{\mathrm{d}t}\right|_{t=0} E(\Sigma_t)
    =\int_{\Sigma} H_1^F\inner{Y,\bm{\nu}}\,\mathrm{d}A
+\sum_{i=1}^2\int_{\partial\Sigma\cap P_{i}}\parens*{\inner{Y,R_{i}(p_{i}(\Phi(\bm{\nu})))}-\omega_{0}^i\inner{Y,\bm{m}_{i}}}\,\mathrm{d}s.
\]
Here, $H_1^F$ denotes the anisotropic mean curvature of $\Sigma$, $p_{i}$ is the projection onto the $\left\{\bm{\nu}, \bm{n}_{i}\right\}$-plane, $R_{i}$ is the $\pi / 2$-rotation in the $\left\{\bm{\nu}, \bm{n}_{i}\right\}$-plane and $\mathbf{m}_{i}$ is the conormal vector field of $\partial \Sigma \subset P_{i}$.
Furthermore, the map \( \Phi \) is defined as follows:
\[
    \Phi\colon\mathbb{S}^{n} \rightarrow \mathbb{R}^{n+1}, \quad \Phi(\xi) := \nabla F(\xi) + F(\xi) \xi.
\] 

By direct calculations, 
the first variational formula leads to
\[
H_1^F=const \, \text{ on } \Sigma \quad \text{ and } \quad \left\langle \Phi(\bm{\nu}),\bm{n}_{i}\right\rangle = \omega_{0}^{i} \quad \text { on } \partial \Sigma\cap P_{i}.
\]
Inspired by the discussion above, we can define an anisotropic $\bm{\omega}_0$-capillary hypersurface in the classical wedge $\mathbf{W}$ as one that satisfies the following condition on its boundary:
\[
\inner{\bm{\nu}^{F},\bm{n}_{i}} = \omega^{i}_{0} \quad \text {on} \quad \partial \Sigma\cap P_{i},
\]
where \(\bm{\omega}_0:=(\omega^{1}_0, \omega^{2}_0)\) is a constant vector
and \(\bm{\nu}^{F}:=\Phi(\bm{\nu})\) is the anisotropic normal vector field of \(\Sigma\).

Our goal in this paper is to present an Alexandrov-type theorem. 
Building upon the geometric proof by Montiel and Ros \cite{Montiel-Ros},
we aim to combine the Minkowski formulae and the Heintze-Karcher inequality. 
As is well known, the Minkowski formula for closed hypersurfaces in Euclidean space
can be derived by applying the divergence theorem to the tangential component of
the position vector field.
It is very interesting to observe that by applying the divergence theorem to the vector field below,
we can derive the  Minkowski formulae in a wedge, where 
\[
    X(x) = \inner{\bm{\nu}^F(x)-\bm{k}^{F},\bm{\nu}(x)}x
    -\inner{x,\bm{\nu}(x)}(\bm{\nu}^F(x)-\bm{k}^{F}).
\]
Here \(\bm{\nu}^F-\bm{k}^F\) is actually the anisotropic normal vector field
\(\bm{\nu}^{\bar{F}}\) with respect to a new
gauge \[\bar{F}(\xi):=F(\xi)-\inner{\xi,\bm{k}^{F}}.\]
Thus \(X(x)\) can be regarded as a vector triple  product \(\bm{\nu}\times (x\times \bm{\nu}^{\bar F})\)
in the \(3\)-dimensional space spanned by \(x,\bm{\nu}\) and \(\bm{\nu}^{\bar F}\). This observation allows us to avoid using the structure lemma in \cite[Proposition 2.4]{AS}, \cite[(15)]{LX} and \cite[Lemma 3.1]{JWXZ2}, thereby simplifying the calculations in the proof.
By performing parallel translations along \(\bm{\nu}^F-\bm{k}^F\), we further obtain the higher order Minkowski formulae.

\begin{theorem}\label{mink}
    Let \(\mathbf{W}\subset\R^{n+1}\) be a classical wedge
    and \(\Sigma\subset\overline{\mathbf{W}}\) be an immersed hypersurface.
    For \(i=1,2\), assume that
    \[
        \inner{\bm{\nu}^{F},\bm{n}_i}=\omega_{0}^i \quad \text{on} \quad \partial_i\Sigma,
    \]
    where \(\omega_{0}^i\) are constants.
    Suppose \(\bm{k}^{F}\) is a constant vector in \(\R^{n+1}\) satisfying
    \[
        \inner{\bm{k}^{F},\bm{n}_i} = \omega_{0}^i, \quad
        \text{for} \quad i=1,2.
    \]
    Then, for \(r\in \{1,\dots,n\}\), we have 
    \begin{equation}\label{highmink}
         \int_\Sigma  H^{F}_{r-1}(F(\bm{\nu})-\inner{\bm{k}^{F},\bm{\nu}})\,\mathrm{d}A
         = \int_\Sigma H_{r}^F\inner{x,\bm{\nu}}\,\mathrm{d}A.
    \end{equation}
    In particular, we have
    \begin{equation}\label{eq:Minkowski}
        \int_{\Sigma} (F(\bm{\nu})-\inner{\bm{k}^F, \bm{\nu}})\,\mathrm{d}A
        =\int_{\Sigma} H_1^F\inner{x, \bm{\nu}}\,\mathrm{d}A.
    \end{equation}
    When \(\omega_0^1=\omega_0^2=0\), i.e., \(\Sigma\) has free boundary, we can take \(\bm{k}^F\) to be the zero vector.
\end{theorem}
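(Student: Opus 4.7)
The plan is to apply the divergence theorem on $\Sigma$ to the tangential vector field suggested in the introduction,
\[
    X = \langle \bm{\nu}^F - \bm{k}^F, \bm{\nu}\rangle\, x - \langle x, \bm{\nu}\rangle\, (\bm{\nu}^F - \bm{k}^F).
\]
A first observation is that $\langle X, \bm{\nu}\rangle = 0$ by direct cancellation, so $X$ is tangent to $\Sigma$. Moreover, $\bm{\nu}^F = \nabla F(\bm{\nu}) + F(\bm{\nu})\bm{\nu}$ with $\nabla F(\bm{\nu}) \in T_{\bm{\nu}} S^{n}$, so $\langle \bm{\nu}^F, \bm{\nu}\rangle = F(\bm{\nu})$ and the scalar factor $\langle \bm{\nu}^F - \bm{k}^F, \bm{\nu}\rangle$ coincides with the integrand $F(\bm{\nu}) - \langle \bm{k}^F, \bm{\nu}\rangle$ of \eqref{eq:Minkowski}.

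For the interior computation, I work in a local orthonormal frame $\{e_\alpha\}$ of $T\Sigma$. Using $D_{e_\alpha} x = e_\alpha$ and the anisotropic Weingarten property $D_{e_\alpha}\bm{\nu}^F \in T\Sigma$, the product-rule expansion of $\operatorname{div}_\Sigma X$ produces four pieces: one proportional to $F(\bm{\nu}) - \langle \bm{k}^F, \bm{\nu}\rangle$ (from $\sum_\alpha \langle e_\alpha, e_\alpha\rangle = n$), one proportional to $H^F\langle x, \bm{\nu}\rangle$ (from the trace $\sum_\alpha \langle D_{e_\alpha}\bm{\nu}^F, e_\alpha\rangle$), and two cross terms produced by tangentially differentiating the two scalar factors. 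The symmetry $\langle D_{e_\alpha}\bm{\nu}, e_\beta\rangle = \langle e_\alpha, D_{e_\beta}\bm{\nu}\rangle$ of the ordinary second fundamental form forces those two cross terms to coincide and cancel, so $\operatorname{div}_\Sigma X$ reduces, up to the paper's normalisation of $H^F$, to a constant-coefficient combination of $F(\bm{\nu}) - \langle \bm{k}^F, \bm{\nu}\rangle$ and $H^F\langle x, \bm{\nu}\rangle$.

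The key geometric step is showing $\int_{\partial\Sigma}\langle X, \bm{\mu}\rangle\, ds = 0$, where $\bm{\mu}$ denotes the outward conormal of $\partial\Sigma$ in $\Sigma$. On each $\partial_i\Sigma\subset P_i$ the position vector satisfies $\langle x, \bm{n}_i\rangle = 0$, and the hypotheses on $\bm{k}^F$ together with the capillary condition give $\langle \bm{\nu}^F - \bm{k}^F, \bm{n}_i\rangle = 0$; hence both $x$ and $\bm{\nu}^F - \bm{k}^F$ lie in $P_i$. Decomposing each of these two vectors as a component along $T(\partial_i\Sigma)$ plus a scalar multiple of $\mathbf{m}_i$ (the conormal of $\partial_i\Sigma$ inside $P_i$), and using that $\bm{\mu}|_{\partial_i\Sigma}$ lies in the two-plane spanned by $\mathbf{m}_i$ and $\bm{n}_i$, the two summands of $\langle X, \bm{\mu}\rangle$ become equal scalar products and cancel. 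The divergence theorem then delivers \eqref{eq:Minkowski}.

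For the higher-order identity \eqref{highmink} the same scheme works with $x$ and $\bm{\nu}^F - \bm{k}^F$ replaced by their images under the $(r-1)$-th anisotropic Newton transformation $T_{r-1}^F$. The interior computation then invokes the Codazzi-type divergence-free property of $T_{r-1}^F$ for hypersurfaces in $\R^{n+1}$, which reduces the expansion to the desired combination of $H_{r-1}^F(F(\bm{\nu}) - \langle \bm{k}^F, \bm{\nu}\rangle)$ and $H_r^F\langle x, \bm{\nu}\rangle$. The boundary vanishing carries over because $T_{r-1}^F$ is a symmetric endomorphism of $T\Sigma$ that respects the orthogonal decomposition along $P_i$. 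The main obstacle I foresee is in the higher-order bookkeeping, namely verifying the precise anisotropic Codazzi identity for $T_{r-1}^F$ and confirming that the capillary condition still forces the boundary cancellation once a nontrivial Newton tensor is inserted; the base case itself is essentially a one-shot computation.
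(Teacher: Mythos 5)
Your treatment of the case \(r=1\) is correct and is essentially the paper's own proof: the same vector field \(X=\inner{\bm{\nu}^F-\bm{k}^F,\bm{\nu}}x-\inner{x,\bm{\nu}}(\bm{\nu}^F-\bm{k}^F)\), the same boundary cancellation (the paper phrases it via \(\inner{X,\bm{\mu}_i}=\inner{(\bm{\nu}^F-\bm{k}^F)\wedge x,\bm{\nu}\wedge\bm{\mu}_i}\), which vanishes because \(x\) and \(\bm{\nu}^F-\bm{k}^F\) are orthogonal to \(\bm{n}_i\) while \(\bm{n}_i\in\mathrm{span}\{\bm{\nu},\bm{\mu}_i\}\); your decomposition along \(\mathbf{m}_i\) is the same fact in different clothing), and the same interior computation with the cross terms killed by the symmetry of the ordinary second fundamental form.

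The genuine gap is in the higher-order case, and it is exactly where you flagged it. First, a concrete error: \(T_{r-1}^F\) is \emph{not} a symmetric endomorphism of \(T\Sigma\). The anisotropic Weingarten operator \(S^F:=d\bm{\nu}^F=A^F\circ d\bm{\nu}\) is a composition of two symmetric operators and is self-adjoint with respect to the induced metric only when they commute; its Newton tensors inherit this non-symmetry. This defect hits your argument in three places, none of which is covered by the phrase ``Codazzi-type divergence-free property.'' (i) Even granting \(\op{div} T^F_{r-1}=0\), expanding the divergence of the modified field leaves the residue \(\inner{S W^{T},T^F_{r-1}x^{T}}-\inner{S x^{T},T^F_{r-1}W^{T}}\) (with \(W=\bm{\nu}^F-\bm{k}^F\), \(S=d\bm{\nu}\), and \(T\) denoting tangential projections); its vanishing needs \((T^F_{r-1})^{*}S\) to be symmetric, which is true but only because \(((S^F)^k)^{*}S=(SA^F)^kS\) is palindromic — an argument absent from your sketch. (ii) The boundary cancellation with \(T^F_{r-1}\) inserted needs \(\bm{\mu}_i\) to be an eigenvector of the adjoint \((T^F_{r-1})^{*}\); this again is true — differentiating \(\inner{\bm{\nu}^F,\bm{n}_i}=\omega_0^i\) along \(\partial_i\Sigma\) gives \(\inner{S^Fv,\bm{\mu}_i}=0\) for \(v\in T\partial_i\Sigma\) under transversality — but is unproven in your proposal. (iii) The divergence-free property itself must be established for a Codazzi operator that is not symmetric; the standard references prove it for symmetric ones. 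Each of these can be repaired, but as written the higher-order half is a program rather than a proof.

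For comparison, the paper avoids all of this machinery: it defines \(\psi_t(x)=x+t(\bm{\nu}^F(x)-\bm{k}^F)\) and observes that, since \(\inner{\bm{\nu}^F-\bm{k}^F,\bm{n}_i}=\omega_0^i-\omega_0^i=0\) on \(\partial_i\Sigma\), each \(\Sigma_t=\psi_t(\Sigma)\) has boundary on \(\partial\mathbf{W}\), keeps the same unit normal and anisotropic normal, and hence satisfies the same capillary condition. The anisotropic principal curvatures transform as \(\kappa_i^F/(1+t\kappa_i^F)\) and the area element picks up the factor \(\prod_i(1+t\kappa_i^F)\), so applying the already-proven identity \eqref{eq:Minkowski} to every \(\Sigma_t\) and comparing coefficients of the resulting polynomial identity in \(t\) yields \eqref{highmink} for all \(r\) at once — no Newton tensors, no Codazzi identity, no boundary eigenvector lemma. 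If you want to keep your route, supply proofs of (i)--(iii) above; otherwise the parallel-hypersurface trick is the shorter path.
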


\begin{remark}
    The set of all vectors \(\bm{k}^F\) satisfying the conditions in Theorem \ref{mink} forms 
    an \((n-1)\)-dimensional affine subspace of \(\R^{n+1}\),
    whose normal space is \(\mathrm{span}\{\bm{n}_1,\bm{n}_2\}\).
\end{remark}

By carefully investigating the relationships of normal vector fields on each part of \(\partial\Sigma\),
we first prove that the anisotropic normal translation map \(\zeta\)
starting from the hypersurface \(\Sigma\) can cover the domain enclosed by the hypersurface.
Upon accomplishing this key step, similar to the closed hypersurface case,
we can finally establish the following Heintze-Karcher inequality
for anisotropic hypersurfaces with free boundary.       

\begin{theorem}\label{freeHK}
 Let \(\mathbf{W}\subset\mathbb{R}^{n+1}\) be a classical wedge
    and \(\Sigma \subset \overline{\mathbf{W}}\) be a smooth, compact, embedded,
    strictly anisotropic mean convex \(\bm{\omega}\)-capillary hypersurface with \(\omega^{i}(x)\leq 0\) (as defined in section \ref{anisotropiccapillary}). 
    Let \(\Omega\) be the enclosed domain by \(\Sigma\) and \(\partial\mathbf{W}\). 
    Then
    \[
        \int_{\Sigma} \frac{F(\bm{\nu})}{H_1^F} \mathrm{~d} A \geq (n+1)|\Omega|.
    \]
    Moreover, equality holds if and only if \(\Sigma\) is an anisotropic free boundary truncated Wulff shape.
\end{theorem}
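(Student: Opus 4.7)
The plan is to adapt Montiel and Ros's geometric argument, combining an anisotropic normal translation map with AM-GM on its Jacobian. Define $\zeta\colon \Sigma \times [0, \infty) \to \R^{n+1}$ by $\zeta(x, t) = x - t\,\bm{\nu}^F(x)$; in an anisotropic principal frame its Jacobian equals $F(\bm{\nu}(x)) \prod_{i=1}^n (1 - t\kappa_i^F(x))$, where $\kappa_1^F, \ldots, \kappa_n^F$ are the anisotropic principal curvatures of $\Sigma$. Strict anisotropic mean convexity gives $H^F > 0$, and I set the cut time
\[
    \tau(x) := \sup\bigl\{t > 0 : \zeta(x, s) \in \overline{\Omega}\text{ for all } s \in [0, t]\bigr\},
\]
so that a standard focal-point argument forces $\tau(x) \leq 1/\max_i \kappa_i^F(x) \leq 1/H^F(x)$.

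The main obstacle, and the genuinely new ingredient compared with the closed or half-space cases, is a \emph{covering lemma}: the map $\zeta$ sends $\mathcal{A} := \{(x,t) : x \in \Sigma,\ 0 \leq t \leq \tau(x)\}$ surjectively onto $\overline{\Omega}$. For each $y \in \Omega$ I would consider the family of scaled Wulff shapes $y + s\,\mathcal{W}_F$, take the first $s_0 > 0$ at which it meets $\partial\Omega$, and observe that an interior contact with $\Sigma$ at a point $x$ forces $y = x - s_0\,\bm{\nu}^F(x) = \zeta(x, s_0)$ from the matching of outer normals. The difficulty is to rule out a first contact lying entirely on $P_1 \cup P_2 \cup L$. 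This is where the hypothesis $\omega^i \leq 0$ enters, and the cleanest setup is provided by the shifted Minkowski norm $\bar F(\xi) = F(\xi) - \langle \xi, \bm{k}^F\rangle$ from the discussion preceding Theorem~\ref{mink}: with respect to $\bar F$ the capillary condition becomes the $\bar F$-free-boundary condition $\langle \bm{\nu}^{\bar F}, \bm{n}_i\rangle = 0$ on $\partial\Sigma \cap P_i$, so that a $\bar F$-Wulff shape touching $P_i$ at a non-$\Sigma$ point can be translated tangentially along $P_i$ while remaining inside $\overline{\Omega}$---with the sign $\omega^i \leq 0$ guaranteeing admissibility of the translation---until it finally contacts $\Sigma$, reducing every spurious boundary contact to an interior one. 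A small auxiliary case handles contacts near the edge $L$ where the two sliding directions interact.

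Granting the covering lemma, the area formula gives
\[
    |\Omega| \;\leq\; \int_\Sigma \int_0^{\tau(x)} F(\bm{\nu})\,\prod_{i=1}^n (1 - t\kappa_i^F(x))\,dt\,dA(x).
\]
On $[0, \tau(x)]$ every factor $1 - t\kappa_i^F$ is nonnegative, so AM-GM gives $\prod_i (1 - t\kappa_i^F) \leq (1 - tH^F)^n$, and the elementary identity $\int_0^{1/H^F}(1 - tH^F)^n\,dt = \frac{1}{(n+1)H^F}$ delivers $(n+1)\,|\Omega| \leq \int_\Sigma F(\bm{\nu})/H^F\,dA$, which is the claimed inequality.

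For the equality case, equality in AM-GM forces $\kappa_1^F = \cdots = \kappa_n^F$ pointwise, so $\Sigma$ is anisotropic-umbilic and hence a piece of a Wulff shape by the standard classification; the capillary condition $\langle \bm{\nu}^F, \bm{n}_i\rangle = \omega_0^i$ then pins this piece down as a truncated Wulff shape meeting $P_1$ and $P_2$ at the prescribed anisotropic contact angles, completing the characterization.
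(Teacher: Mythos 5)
Your skeleton (the map $\zeta(x,t)=x-t\bm{\nu}^F(x)$, a covering claim, the area formula, AM--GM, umbilicity at equality) is the same as the paper's, but the covering lemma---which you correctly identify as the genuinely new ingredient---is not established by your argument, and the route you propose would fail. First, by taking the first contact of $y+s\,\mathcal{W}_F$ with all of $\partial\Omega$, you make contacts on the wetted wall region unavoidable (take $y$ very close to $P_i$ and far from $\Sigma$); these are not spurious events to be ``ruled out,'' they genuinely occur. Your sliding argument cannot repair this for the purpose of covering: translating the Wulff shape along $P_i$ moves its center, so when the translated shape finally touches $\Sigma$ at some point $x$, the identity $x-s_0\bm{\nu}^F(x)$ recovers the \emph{translated} center, not the original point $y$, and nothing shows $y\in\zeta(\mathcal{A})$. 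The paper avoids this entirely: it takes first contact with $\Sigma$ alone, allows the Wulff shape to protrude through $\partial\mathbf{W}$, and only uses the inclusion $D\cap\mathbf{W}\subset\Omega$, i.e.\ \eqref{eq:inc}. Second, you never treat first contact at a point of $\partial\Sigma$ itself (on $P_i$ or on the edge $L$), where the Wulff shape touches $\Sigma$ but not necessarily tangentially in the interior sense. This is exactly where the hypothesis $\omega^i\leq 0$ does its work in the paper: the angle inequalities forced by \eqref{eq:inc} place $\bm{\nu}_{\scriptscriptstyle\mathcal{W}}$ on the minimizing geodesic arc from $\bm{\nu}$ to $-\bm{n}_i$ (respectively, in the geodesic triangle with vertices $\bm{\nu},-\bm{n}_1,-\bm{n}_2$), and the monotonicity Lemma \ref{lem:monotoncity} then yields $\langle\bm{\nu}^F_{\scriptscriptstyle\mathcal{W}},\bm{n}_i\rangle\leq\omega^i\leq 0$, contradicting $\langle\bm{\nu}^F_{\scriptscriptstyle\mathcal{W}},\bm{n}_i\rangle=-\langle y,\bm{n}_i\rangle/r_0>0$ which holds because $y$ lies in the open wedge. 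Without this case analysis there is no covering lemma, and hence no inequality.

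There are three further problems. The $\bar F$ reduction is misplaced here: in this theorem the $\omega^i(x)$ are functions, merely assumed $\leq 0$, so no constant vector $\bm{k}^F$ realizes them; and even for constant data, the existence of $\bm{k}^F$ with $F^o(\bm{k}^F)<1$ is a nontrivial extra hypothesis (see the proposition at the end of Section \ref{HKineqsec}). In the paper the $\bar F$ trick flows in the opposite direction: it is the device by which the capillary Theorem \ref{capiHKineq} is later deduced \emph{from} this free boundary theorem. Next, your claim that $\tau(x)\leq 1/\max_i\kappa_i^F(x)$ follows from ``a standard focal-point argument'' is false as stated: an inward normal segment can remain in $\overline{\Omega}$ far beyond the focal time (already in the isotropic case, think of a point on the side of a long capsule-shaped domain, where the segment persists for the full width $2R$ while the focal bound is $R$). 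The paper instead builds the curvature bound into the definition of $Z$ and obtains $r_0\leq 1/\max_i\kappa_i^F(x)$ from the tangential comparison at the interior first-contact point, which is all that coverage requires. Finally, your equality case is incomplete: equality in AM--GM gives umbilicity and hence a piece of a Wulff shape, but the theorem asserts the contact is \emph{free boundary}, i.e.\ $\omega^i\equiv 0$, not merely ``prescribed contact angles.'' The paper closes this by observing that if $\omega^i<0$ then $\Omega\setminus\zeta(Z)$ has positive measure, which is incompatible with equality.
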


\begin{remark}
    Given a constant vector \(\bm{k}_0\) defined in  \cite{JWXZ},
    we set \(F(\xi):=|\xi|-\inner{\bm{k}_0,\xi}\),
    then the above theorem reduces to Theorem 1.5 in \cite{JWXZ} for \(|\bm{k}_{0}|\leq 1\).
\end{remark}

Inspired by the work in \cite{WX},
we reduce the capillary case to the free boundary case by choosing a new gauge.
Thus we obtain the following
Heintze-Karcher type inequality and the Alexandrov type theorem.

\begin{theorem}\label{capiHKineq}
   For \(i=1,2\), let \(\omega_{0}^i\) be constants, \(\mathbf{W}\subset\mathbb{R}^{n+1}\) be a classical wedge and \(K\) be the closed body enclosed by \(\mathcal{W}:=\Phi(S^n)\).
   Assume that there exists a constant vector \(\bm{k}^{F}\) satisfying
    \begin{equation}
        \label{condition:kf}
        \inner{\bm{k}^{F},\bm{n}_i}=\omega^i_{0} \, \text{ for } \, i=1, 2 \quad
        \text{and} \quad \bm{k}^F\in K. \quad 
    \end{equation}
    Let \(\Sigma\subset\overline{\mathbf{W}}\) be a smooth, compact, embedded,
    strictly anisotropic mean convex hypersurface with
    \(\inner{\bm{\nu}^{F}(x),\bm{n}_{i}}:=\omega^i(x)\leq\omega_0^i\)
    for \(x\in\partial\Sigma\cap P_i,\,i=1,2\).
    Let \(\Omega\) be the enclosed domain by \(\Sigma\) and \(\partial\mathbf{W}\). 
    Then
\begin{equation}\label{hkineq}
    \int_{\Sigma}\frac{F(\bm{\nu})-\inner*{\mathbf{\bm{\nu}},\bm{k}^{F}}}{H_1^F}\mathrm{~d}A
        \geq (n+1)|\Omega|.
    \end{equation}
    Moreover, the equality holds if and only if
    \(\Sigma\) is a \(\bm{\omega}_0\)-capillary truncated Wulff shape.
\end{theorem}

\begin{theorem}\label{capialex}
    Let \(\omega_{0}^i\), \(\mathbf{W}\), and \(K\) be as defined in Theorem \ref{capiHKineq}.
    Assume that there exists a constant vector \(\bm{k}^{F}\) satisfying
    \[
        \inner{\bm{k}^{F}, \bm{n}_i} = \omega_{0}^i \quad \text{for } i=1,2 \quad
        \text{and} \quad \bm{k}^F\in K.
    \]
    Let \(\Sigma \subset \overline{\mathbf{W}}\) be a smooth embedded compact anisotropic \(\bm{\omega}_{0}\)-capillary hypersurface. Then
    
    (i) If \(\bm{k}^F \in \mathring{K}\) and \(\Sigma\) has constant \(r\)-th anisotropic mean curvature for some \(r \in \{1, \ldots, n\}\), \(\Sigma\) is an \(\bm{\omega}_{0}\)-capillary truncated Wulff shape.
    
    (ii) If \(\bm{k}^F \in K\) and \(\Sigma\) has constant anisotropic mean curvature, \(\Sigma\) is an \(\bm{\omega}_{0}\)-capillary truncated Wulff shape.
\end{theorem}

\begin{remark}
    (i) The vector \(\bm{k}^F\) required by Theorems \ref{capiHKineq} and \ref{capialex} may not always exist.
    Lemma \ref{lem:subcase} addresses this issue by breaking down the condition \eqref{condition:kf} into two separate cases, thereby clarifying the implications of the theorem. This provides a framework in the proof of Theorem \ref{capialex}. 
    
    (ii) If \(\bm{k}^F\) does exist, then \(\omega_{0}^i\) must be within the range \([-F(-\bm{n}_i), F(\bm{n}_i)]\) for \(i=1,2\).
    
    (iii)  When \(\bm{k}^{F}\in\partial K\), \(\bm{\omega}_{0}\)-capillary truncated Wulff shapes may not exist. Thus, the inequality \eqref{hkineq} is strict and there are no embedded compact anisotropic \(\bm{\omega}_{0}\)-capillary hypersurface in \(\overline{\mathbf{W}}\).
\end{remark}

\begin{remark}
    For convenience, we prove the free boundary case first
    and then derive the capillary case from the free boundary case using an idea presented in \cite{WX}.
    However, the technique of our proof can directly handle the capillary case,
    as shown in the subsequent proof.
\end{remark}

\begin{corollary}\label{noexist}
    Let \(\omega_{0}^i\), \(\mathbf{W}\), and \(K\) be as defined in Theorem \ref{capiHKineq}.
    Assume that there exists a constant vector \(\bm{k}^{F}\) satisfying:
    \[
        \inner{\bm{k}^{F}, \bm{n}_i} = \omega_{0}^i \quad \text{for } i=1,2.
    \]
    Then

    (i) there exists no smooth, compact, embedded, $\bm{\omega}_0$-capillary hypersurface with constant \(r\)-th anisotropic mean curvature for some $r \in\{2, \cdots, n\}$, such that $\Sigma \cap L=\varnothing$ and \(\bm{k}^F \in \mathring{K}\).
    
    (ii) there exists no smooth, embedded compact \(\bm{\omega}_{0}\)-capillary hypersurface with constant anisotropic mean curvature such that $\Sigma \cap L=\varnothing$ and \(\bm{k}^F \in K\). 
\end{corollary} 

The paper is organized as follows.
We begin in section \ref{Pre} by describing the anisotropic and geometric frameworks that will be utilized throughout the paper.
In section \ref{Minkowskisec}, 
we prove the Minkowski-type formulae in a wedge.
Section \ref{HKineqsec} presents the Heintze-Karcher inequality in a wedge.
In section \ref{alexthmsec},
we start with Lemma \ref{lem:subcase} to examine the existence of the vector \(\bm{k}^F\).
Subsequently, we explore the positivity of the \(r\)-th anisotropic mean curvature.
Finally, we derive the Alexandrov-type theorem for the free boundary case within a wedge and extend this result to the capillary case.

\section{Preliminaries}\label{Pre}

\subsection{Gauges and Wulff shapes}

Let \(F\colon\mathbb{S}^{n}\to\mathbb{R}\) be a nonnegative \(C^2\) function on \(\mathbb{S}^{n}\),
such that
\begin{equation}
    \label{cond:convex}
    \left(\nabla^2F+F\sigma\right)>0,
\end{equation}
where \(\sigma\) is the canonical metric on \(\mathbb{S}^{n}\),
\(\nabla\) is the gradient on \(\mathbb{S}^{n}\)
and \(\nabla^2\) is the Hessian on \(\mathbb{S}^{n}\).
We call \(F\) a \textit{gauge} on \(S^n\).
Although we do not assume \(F\) to be positive, the convexity condition \eqref{cond:convex} ensures that there is at most one point \(\xi_0\in S^n\) such that \(F(\xi_0)=0\).
If such a point exists, we say that $F$ is \textit{degenerate} or \(F\) \textit{degenerates} at \(\xi_0\).

The \textit{Cahn-Hoffman map} associated to \(F\) is given by
\[
    \Phi\colon\mathbb{S}^{n}\to\mathbb{R}^{n+1}, \quad \Phi(\xi):=\nabla F(\xi)+F(\xi)\xi.
\]
The unit \textit{Wulff shape} with respect to \(F\) is defined to be
\[
    \mathcal{W}=\Phi(S^n).
\]
\(\mathcal{W}\) encloses a closed strictly convex body \(K\) and \(\Phi\colon S^n\to\mathcal{W}\) is a diffeomorphism.
The Wulff shape centered at \(x_0\) with radius \(r_0\) is defined to be
\[
    \mathcal{W}_{r_0}(x_0) = \{x_0+r_0x\mid x\in\mathcal{W}\}.
\]
Let \(\tilde{F}\) be the positive one-homogeneous extension of \(F\) to \(\mathbb{R}^{n+1}\).
Then \(\tilde{F}\) is a gauge on \(\R^{n+1}\), as stated in the introduction.
We have \(\Phi(x)=D\tilde{F}(x)\) for $x\in S^n$, where \(D\) is the Euclidean derivative.

We collect some facts about \(F\) and \(K\).

\begin{proposition}\label{anisocsineq}
    The following statements hold:
    \begin{enumerate}
    \item \(\tilde{F}\) is the support function of \(K\), i.e.
    \[
        \tilde{F}(x)=\sup\{\inner{x,y}\mid y\in K\}.
    \]
    \item \(F(x)=\inner*{\Phi(x),x}\), for $x\in S^n$. 
    \end{enumerate}
\end{proposition}

\begin{proof}
    Due to \eqref{cond:convex}, \(\tilde{F}\) is a positive one-homogeneous convex function, thus it is sublinear.
    According to  Theorem 1.7.1 in \cite{Schneider}, there exists a unique convex body \(\tilde{K}\) with the support function \(\tilde{F}\).
    Since $\tilde{F}$ is differentiable at every \(u\in \mathbb{R}^{n+1}\backslash\{0\}\), by Corollary 1.7.3 of \cite{Schneider}, the support set 
    \(F(\tilde{K},u):=\{x\in \mathbb{R}^{n+1}|\inner{x,u}=\tilde{F}(u)\}\cap \tilde{K}=\{x\}\)  contains exactly one point and \(x=D\tilde{F}(u)=\Phi(u)\).
    This implies that  \(\partial \tilde{K}=\Phi(S^n)=\mathcal{W}\). Consequently, \(K=\tilde{K}\) associated with the support function \(\tilde{F}\).
    Hence, statements (1) and (2) are confirmed.
\end{proof}

By Proposition \ref{anisocsineq}, \(\Phi\) can be geometrically interpreted as follows: 
For any point \(x\in S^n\), \(\Phi(x)\) corresponds to the unique point \(y\in\mathcal{W}\) where the unit outward normal vector at \(y\) is precisely \(x\). Equivalently, \(\Phi\) is the inverse of the Gauss map of \(\mathcal{W}\).
Thus, the tangent map \(\mathrm{d}\Phi\) is an endomorphism of the tangent space \(T_xS^n\).
Define \(A^F\coloneqq\mathrm{d}\Phi\).
Then
\[
    \inner{A^F(X),Y}=\nabla^2(X,Y)+F\inner{X,Y}.
\]

\subsection{Hypersurfaces in a wedge}\label{Geometric setting}

Let \(\Sigma\) be a smooth, compact, embedded hypersurface within \(\overline{\mathbf{W}}\), which may contain corners.
We require the corners to be of at most codimension \(2\).
In other words, \(\Sigma\) is locally diffeomorphic
to an open set in \(\R_{\geq 0}^2\times\R^{n-2}\).
The hypersurface \(\Sigma\) can be decomposed as
\[
    \Sigma = \Sigma_0\sqcup\Sigma_1\sqcup\Sigma_2,
\]
with each \(\Sigma_i\) being a smooth manifold of dimension \(n-i\).
In this decomposition, \(\Sigma_0\) denotes the interior of \(\Sigma\), while \(\Sigma_1 \sqcup \Sigma_2\) represents the boundary of \(\Sigma\).
We assume \(\Sigma_0\subset\mathbf{W}\), \(\Sigma_1\subset P_1\cup P_2\) and \(\Sigma_2\subset L\).
We always assume \(\Sigma\) is connected.

The boundary \(\partial\Sigma\) of \(\Sigma\) is a closed
topological submanifold in \(\partial\mathbf{W}\).
Since \(\partial \mathbf{W}\) is homeomorphic to \(\R^{n}\),
Jordan's theorem ensures that \(\partial\Sigma\) has a well-defined interior \(\Gamma\) within \(\partial \mathbf{W}\).
\(\Sigma\cup\Gamma\) is a closed topological submanifold in \(\R^{n+1}\).
Consequently, again by Jordan's theorem,
there is a well-defined interior \(\Omega\) enclosed by \(\Sigma\cup\Gamma\) in \(\R^{n+1}\).

Let \(x\) denote the position vector of \(\Sigma\), and let
\(\bm{\nu}\) be the outward-pointing unit normal vector field along \(\Sigma\).
For \(i=1,2\), define \(\partial_i\Sigma\)
as the intersection of  \(\partial \Sigma\)
 with the closure of \(P_i\), i.e.,
\(\partial_i\Sigma=\partial\Sigma\cap\overline{P_i}\).
As a result, \(\partial_i\Sigma\) is a smooth manifold with boundary \(\Sigma_0\). 

Consider \(\partial_i\Sigma\) as a submanifold of \(\Sigma\).
This inclusion determines a unit normal vector field along \(\partial_i\Sigma\)
that points outside \(\Sigma\).
We denote it by \(\bm{\mu}_i\). 

Consider \(\partial_i\Sigma\) as a submanifold of \(P_i\).
This inclusion determines a unit normal vector field along \(\partial_i\Sigma\)
that points inside \(\Gamma\).
We denote it by \(\bm{m}_{i}\). 

Consider \(\Sigma_2\) as a submanifold of \(\partial_i\Sigma\).
This inclusion determines a unit normal vector field on \(\Sigma_2\)
that points outside \(\partial_i\Sigma\).
We denote it by \(\bm{\tau}_i\). 

Consider \(\Sigma_2\) as a submanifold of \(L\).
This inclusion determines a unit normal vector field along \(\Sigma_2\)
that points inside \(\Gamma\cap L\).
We denote it by \(\bm{l}\).
   
Fix \(i=1\) or \(2\).
Then 
the sets \(\{\bm{\nu}, \bm{\mu}_i\}\) and \(\{\bm{n}_i, \bm{m}_{i}\}\) span  the same plane, which is
orthogonal to \(\partial_i\Sigma\) at every point \(x\).
Moreover, the vectors \(\bm{\nu},\bm{n}_1,\bm{n}_2,\bm{\tau}_1,\bm{\tau}_2\) and \(\bm{l}\) are all contained in  a \(3\)-dimensional linear subspace
that is orthogonal to \(\Sigma_2\).

We will always work in the above setting.
It is illustrated in Figure \ref{fig:fields}.
We have the following lemma of these vector fields.

\begin{figure}[h]
    \includegraphics*{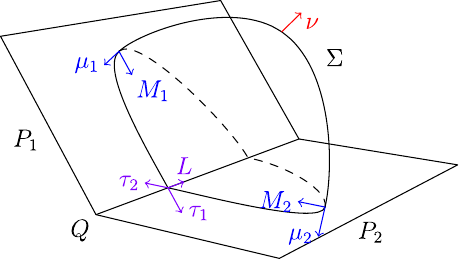}
    \caption{An illustration of the vector fields defined above.}
    \label{fig:fields}
\end{figure}

\begin{lemma}
    Let \(\Sigma\subset\overline{W}\) be a smooth compact embedded hypersurface in a wedge
    and \(\bm{\nu},\bm{n}_i,\bm{\mu}_i,\bm{m}_i,\bm{\tau}_i,\bm{l}\) be the vector fields defined above.
    Then
    \begin{enumerate}
        \item For \(i=1,2\), \(\bm{\mu}_i\) is a unit vector orthogonal to \(\partial_i\Sigma\) and
        \[
            \inner{\bm{\mu}_i,\bm{\nu}}=0, \quad
            \inner{\bm{\mu}_i,\bm{n}_i}\geq 0.
        \]
        \item For \(i=1,2\), \(\bm{m}_{i}\) is a unit vector orthogonal to \(\partial_i\Sigma\) and
        \[
            \inner{\bm{m}_{i},\bm{n}_i}=0, \quad
            \inner{\bm{m}_{i},\bm{\nu}}\leq 0.
        \]
        \item For \(i=1,2\), \(\bm{\tau}_i\) is a unit vector orthogonal to \(\Sigma_0\) and
        \[
            \inner{\bm{\tau}_i,\bm{\nu}}=\inner{\bm{\tau}_i,\bm{n}_i}=0, \quad
            \inner{\bm{\tau}_i,\bm{n}_{3-i}}\geq 0.
        \]
        \item \(\bm{l}\) is a unit vector orthogonal to \(\Sigma_0\) and
        \[
            \inner{\bm{l},\bm{n}_1}=\inner{\bm{l},\bm{n}_2}=0, \quad
            \inner{\bm{l},\bm{\nu}}\leq 0.
        \]
    \end{enumerate}
    If, in addition, \(\Sigma\) intersects with \(P_1,P_2\) and \(L\) transversally,
    i.e., \(\bm{\nu},\bm{n}_i\) are linearly independent along \(\partial_i\Sigma\)
    for \(i=1,2\) and \(\bm{\nu},\bm{n}_1,\bm{n}_2\) are linearly independent along \(L\),
    then \(\bm{\mu}_i,\bm{m}_i,\bm{\tau}_i,\bm{l}\) are uniquely determined by the above properties.
\end{lemma}

\begin{proof}
    The proof is quite trivial. 
    We will only prove (1) here.
    (2), (3) and (4) follow from almost the same arguments.

    For (1), \(\bm{\mu}_i\) is orthogonal to \(\partial_i\Sigma\)
    and tangent to \(\Sigma\) by definition.
    Since \(\bm{\nu}\) is orthogonal to \(\Sigma\), we have \(\inner{\bm{\mu}_i,\bm{\nu}}=0\).
    For each point \(x\in\partial_i\Sigma\),
    the fact that \(\bm{\mu}_i(x)\) points outside \(\Sigma\) implies that
    there is a smooth curve \(\alpha\) starting at \(x\) with initial velocity \(-\bm{\mu}_i(x)\)
    which lies entirely in \(\Sigma\).
    Since \(\Sigma\subset\overline{\mathbf{W}}\), \(\alpha\) lies entirely in \(\overline{\mathbf{W}}\).
    This implies that \(\inner{\bm{\mu}_i,\bm{n}_i}\geq 0\).
  
    It remains to show that there is exactly one vector field on \(\partial_i\Sigma\)
    satisfying these properties, provided that \(\Sigma\) intersects \(P_i\) transversally.
    In fact, if \(\Sigma\) and \(P_i\) are transversal, then \(\bm{\nu},\bm{n}_i\) are linearly independent.
    The normal space of \(\partial_i\Sigma\) is of dimension \(2\).
    Hence, \(\bm{\nu}, \bm{n}_ i\) form the basis of this normal space.
    As a result, \(\bm{\mu}_i\) is a unit vector spanned by \(\bm{\nu},\bm{n}_i\)
    and satisfies
    \[
        \inner{\bm{\mu}_i,\bm{\nu}}=0, \quad \inner{\bm{\mu}_i,\bm{n}_i}\geq 0.
    \]
    Such \(\bm{\mu}_i\) is obviously unique.
\end{proof}

\subsection{The anisotropic shape operator and anisotropic capillary hypersurfaces}\label{anisotropiccapillary}

Let \(\Sigma\) be a hypersurface in a wedge \(\overline{\mathbf{W}}\)
and let \(\bm{\nu}\) denote the unit outward normal vector field.
The \textit{anisotropic normal vector field} is defined as  \(\bm{\nu}^F=\Phi(\bm{\nu})\).
The \textit{anisotropic shape operator} is given by
\[
\mathrm{d}\bm{\nu}^F=\mathrm{d}\Phi|_{\bm{\nu}}\circ\mathrm{d}\bm{\nu} \colon T\Sigma\to T\Sigma.
\]
By linear algebra, the eigenvalues of \(\mathrm{d}\bm{\nu}^F\) are real, which are called the \textit{anisotropic principal curvatures} of \(\Sigma\) and denoted by \(\kappa_1^F,\ldots,\kappa_n^F\).
For \(r=0,1,\ldots,n\), the \(r\)-th \textit{anisotropic mean curvature} of \(\Sigma\) is defined as
\[
    H_r^F=\frac{1}{\binom{n}{r}}\sigma_r(\kappa_1^F,\ldots,\kappa_n^F),
\]
where \(\sigma_r\) represents the $r$-th elementary symmetric polynomial in \(n\) variables.

Let \(\omega^1,\omega^2\) be real functions on \(\Sigma\) and let \(\bm{\omega}=(\omega^1,\omega^2)\).
We call \(\Sigma\) an \textit{anisotropic \(\bm{\omega}\)-\text{capillary} hypersurface} in \(\overline{\mathbf{W}}\) if
\[
    \inner{\bm{\nu}^F,\bm{n}_i}=\omega^i \quad
    \text{on} \,\, \partial_i\Sigma
    \quad
    \text{for} \,\, i=1,2.
\]
If \(\omega^1=\omega^2=0\), we say \(\Sigma\) has \textit{anisotropic free boundary}.

Truncated Wulff shapes are always anisotropic capillary hypersurfaces in \(\overline{\mathbf{W}}\) with constant \(\bm{\omega}\).
In fact, suppose \(\mathcal{W}_r(y)\) is a Wulff shape
and intersects with \(\bar{P}_i\) at some point \(x\).
The anisotropic normal vector of \(\mathcal{W}_r(y)\) at \(x\) is
\[
    \bm{\nu}_{\scriptscriptstyle\mathcal{W}}^F=\frac{x-y}{r}.
\]
Thus, \(\inner{\bm{\nu}_{\scriptscriptstyle\mathcal{W}}^F,\bm{n}_i}=-\frac{1}{r}\inner{y,\bm{n}_i}\) is constant.
Write \(\omega_0^i=-\frac{1}{r}\inner{y,n_i}\) and \(\bm{\omega}_0=(\omega_0^1,\omega_0^2)\).
We say \(\mathcal{W}_r(y)\) is an \textit{\(\bm{\omega}_0\)-capillary Wulff shape} (even if it does not intersect with \(\partial{\mathbf{W}}\)).
If \(\mathcal{W}_r(y)\cap\overline{\mathbf{W}}\) is a hypersurface (with corners), we say \(\mathcal{W}_r(y)\cap\overline{\mathbf{W}}\) is a \textit{truncated \(\omega\)-capillary Wulff shape}.

Conversely, for any constant \(\bm{\omega}_0=(\omega_0^1,\omega_0^2)\),
let \(\bm{k}^F\in\R^{n+1}\) be a vector satisfying
\[
    \inner{\bm{k}^F,\bm{n}_i}=\omega_0^i \quad
    \text{for} \quad i=1,2.
\]
Then the Wulff shape \(\mathcal{W}_1(-\bm{k}^F)\) is an \(\bm{\omega}_0\)-capillary Wulff shape.

\section{Minkowski type formulae}\label{Minkowskisec}

In this section, we will initially establish a Minkowski-type formula for an anisotropic capillary hypersurface within a classical wedge, followed by the derivation of higher-order Minkowski type formulae.

\begin{proof}[Proof of Theorem \ref{mink}]
   Consider the vector field defined on \(\Sigma\) as follows
    \[
        X(x) = \inner{\bm{\nu}^F-\bm{k}^{F},\bm{\nu}}x-\inner{x,\bm{\nu}(x)}(\bm{\nu}^F(x)-\bm{k}^{F}).
    \]
   By applying the divergence theorem, we have
    \begin{equation}\label{eq:divX}
        \int_\Sigma\op{div}X = \int_{\partial_1\Sigma}\inner{X,\bm{\mu}_1}
        +\int_{\partial_2\Sigma}\inner{X,\bm{\mu}_2}.
    \end{equation}
    We claim that \(\inner{X,\bm{\mu}_i}=0\) for \(i=1,2\).

    In fact, \(X\) can be characterized by the property that
    \[
        \inner{X,Y} = \inner{(\bm{\nu}^F-\bm{k}^{F})\wedge x,\bm{\nu}\wedge Y}, \quad
        \text{for all} \quad Y\in T\Sigma.
    \]
    Then
    \[
        \inner{X,\bm{\mu}_i} = \inner{(\bm{\nu}^F-\bm{k}^{F})\wedge x,\bm{\nu}\wedge\bm{\mu}_i}.
    \]
    Along \(\partial_i\Sigma\), \(\bm{\nu}^F-\bm{k}^{F}\) and \(x\) are always orthogonal to \(\bm{n}_i\),
    while \(\{\bm{\nu}, \bm{\mu}_i\}\) and \(\{\bm{n}_i, \bm{m}_i\}\) span the same plane. 
    This implies that
    \[
        \inner{(\bm{\nu}^F-\bm{k}^{F})\wedge x,\bm{\nu}\wedge\bm{\mu}_i} = 0.
    \]
    Thus \eqref{eq:divX} reduces to
    \[
    \int_\Sigma\op{div}X = 0.
    \]
    We now show that \(\op{div}X = n(F(\bm{\nu}) - \inner{\bm{\nu}, \bm{k}^{F}} - H_1^F \inner{\bm{\nu}, x})\).
    
    Suppose \(e_1,\ldots,e_{n}\) is any local frame on \(\Sigma\).
    Then
    \[
        \op{div}X = g^{ij}\inner{\nabla_{e_i}X,e_j}.
    \]
    With the above characterization of \(X\), we have
    \begin{align*}
        \inner{\nabla_{e_i}X,e_j}
        & = \inner{D_{e_i}X,e_j} \\
        & = e_i\inner{X,e_j} - \inner{X,D_{e_i}e_j} \\
        & = \inner{(D_{e_i}\bm{\nu}^F)\wedge x,\bm{\nu}\wedge e_j}
        + \inner{(\bm{\nu}^F-\bm{k}^{F})\wedge e_i,\bm{\nu}\wedge e_j} \\
        & + \inner{(\bm{\nu}^F-\bm{k}^{F})\wedge x,(D_{e_i}\bm{\nu})\wedge e_j}.
    \end{align*}
    The third equality uses the fact that \(D_{e_i}\bm{k}^{F}=0\) and \(D_{e_i}x=e_i\).
    We compute each of the three terms separately. For the first term,
    \[
        g^{ij}\inner{(D_{e_i}\bm{\nu}^F)\wedge x,\bm{\nu}\wedge e_j}
        = -g^{ij}\inner{D_{e_i}\bm{\nu}^F,e_j}\inner{\bm{\nu},x}
        = -nH_1^F\inner{\bm{\nu},x},
    \]
  where  the first equality follows from \(\inner{D_{e_i}\bm{\nu}^F,\bm{\nu}}=0\). For the second term,
    \[
        g^{ij}\inner{(\bm{\nu}^F-\bm{k}^{F})\wedge e_i,\bm{\nu}\wedge e_j}
        = g^{ij}\inner{\bm{\nu}^F-\bm{k}^{F},\bm{\nu}}\inner{e_i,e_j}
        =n\inner{\bm{\nu}^F-\bm{k}^F, \bm{\nu}}
        = n(F(\bm{\nu})-\inner{\bm{k}^{F},\bm{\nu}}).
    \]
    where the first equality is derived from \(\inner{e_i,\bm{\nu}}=0\). For the third term,
    \[
        g^{ij}\,(D_{e_i}\bm{\nu})\wedge e_j
        = g^{ij}h_i^k\,e_k\wedge e_j
        = h^{jk}\,e_k\wedge e_j
        = 0,
    \]
   where the last equality is due to the symmetry of  \(h\).
    Thus, we have
    \[
        g^{ij}\inner{(\bm{\nu}^F-\bm{k}^{F})\wedge x,(D_{e_i}\bm{\nu})\wedge e_j} = 0.
    \]
 It follows that
    \[
        \op{div}X = n\left(F(\bm{\nu})
        -\inner{\bm{\nu},\bm{k}^{F}}-H_1^F\inner{\bm{\nu},x}\right).
    \]
    This completes the proof of \eqref{highmink} for $r=1$.
    
    Next, for small \(t\), we define
    \[
        \psi_t(x)=x+t(\bm{\nu}^{F}(x)-\bm{k}^{F}) \quad
        \text{for} \quad x\in\Sigma,
    \]
    which defines a family of parallel hypersurfaces $\Sigma_{t}$.

    On one hand, the \(\bm{\omega}\)-capillary condition
    and the definition of \(\bm{k}^{F}\) yield that
    for any \(x\in\partial\Sigma\cap P_{i}\),
    \[
        \inner{x+t(\bm{\nu}^F(x)-\bm{k}^F),\bm{n}_i}
        =t(\omega^{i}-\omega^{i})=0.
    \]
    This indicates that \(\psi_t(x)\in\partial\mathbf{W}\) for \(x\in\partial\Sigma\),
    which means \(\partial\Sigma_t\subset\partial\mathbf{W}\). 

    On the other hand, if \(e_1^{F},\cdots,e_n^F\)
    are anisotropic principal directions at \(x\in\Sigma\)
    corresponding to \(\kappa_i^F\) for \(i=1,\ldots,n\), we have
    \[
        (\psi_t)_*(e_i^F)=(1+t\kappa_i^F)e_i^F, \quad i=1,\ldots,n.
    \]

    These imply \(\bm{\nu}_{\Sigma_t}(\psi_t(x))=\bm{\nu}(x)\),
    so \(\bm{\nu}^F_{\Sigma_t}(\psi_t(x))=\bm{\nu}^F(x)\).
    Here \(\bm{\nu}_{\Sigma_t}\) and \(\bm{\nu}^F_{\Sigma_t}\) denote
    the outward normal vector field and anisotropic normal vector field to \(\Sigma_t\) respectively.
    Moreover, we have
    \[
        \inner{\bm{\nu}^F_{\Sigma_t}(\psi_t(x)),\bm{n}_{i}}
        = \inner{\bm{\nu}^F(x),\bm{n}_{i}}=\omega^{i}.
    \]

    Therefore, $\Sigma_t$ is also an anisotropic $\bm{\omega}$-capillary hypersurface in $\overline{\mathbf{W}}$ for any small $t$. So using \eqref{eq:Minkowski} for every such $\Sigma_t$, we find that
    \begin{equation}\label{when t}
        \int_{\Sigma_t=\psi_t(\Sigma)}
        \left( F(\bm{\nu}_t)-\inner{\bm{k}^{F},\bm{\nu}_{t}}
        - H_1^F(t)\inner{\psi_t,\bm{\nu}_t}\right) \mathrm{d} A_t=0.
    \end{equation}

    It is easy to see that the corresponding anisotropic principal curvatures are given by
    \[
        \kappa_i^F(\psi_t(x))=\frac{\kappa_i^F(x)}{1+t\kappa_i^F(x)} .
    \]
    Hence, if we denote \(\mathcal{P}_{n}(t)\) by
    \[
        \mathcal{P}_{n}(t)=\prod_{i=1}^{n}(1+t\kappa_{i}^{F})
        =\sum_{i=0}^{n}\binom{n}{i}H_{i}^{F}t^{i},
    \]
    then the anisotropic mean curvature of \(\Sigma_t\) at \(\psi_t(x)\) is given by
    \[
        H_1^F(t)=\frac{\mathcal{P}_{n}'(t)}{\mathcal{P}_{n}(t)}
        =\frac{\sum\limits_{i=0}^{n}i\displaystyle\binom{n}{i}H_i^Ft^{i-1}}{\mathcal{P}_{n}(t)}.
    \]

    Therefore, combining the tangential Jacobian of $\psi_t$ along $\Sigma$ at $x$
    \[
        \mathcal{J}^{\Sigma}\psi_t(x)=\prod_{i=1}^{n}(1+t \kappa_i^F(x))=\mathcal{P}_{n}(t).
    \]
    We can apply (\ref{when t}) to get
    \[
        \int_{\Sigma}(F(\bm{\nu})-\inner{\bm{k}^{F},\bm{\nu}})\mathcal{P}_{n}(t) 
        -\mathcal{P}_{n}^{\prime}(t)(\inner{x,\bm{\nu}}+t\inner{\bm{\nu}^{F},\bm{\nu}}
        -t\inner{\bm{k}^{F},\bm{\nu}})\,\mathrm{d} A_x=0 .
    \]

    Hence, by a direct computation, we obtain (\ref{highmink}).
\end{proof}

\begin{remark}
The same proof is also valid for general wedges bounded by multiple hyperplanes;
for a detailed definition, refer to \cite{JWXZ}.
\end{remark}

%%%%%%%%%%
\section{Heintze-Karcher type inequality}\label{HKineqsec}
%%%%%%%%%%
In this section, we aim to establish Theorem \ref{freeHK}.
First, we introduce the following lemma regarding monotonicity, which is proved in \cite[Proposition 3.1]{JWXZ2}.
We state it in a slightly different form.

\begin{lemma} \label{lem:monotoncity}
    Let \(z\) be a point in \(S^{n}\)
    and \(\gamma\colon[0,\pi]\to S^{n}\) be a unit speed geodesic such that \(\gamma(0)=z\).
    Then
    \[
        f(t) = \inner{\Phi(\gamma(t)),z}
    \]
    is a strictly decreasing function on \([0,\pi]\).
\end{lemma}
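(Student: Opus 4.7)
The plan is to compute $f'(t)$ directly by working with the positively one-homogeneous extension $\tilde F$ of $F$, under which $\Phi=D\tilde F$. This rewrites
\[
    f(t)=\inner{D\tilde F(\gamma(t)),z},
\]
and differentiation yields
\[
    f'(t)=D^2\tilde F(\gamma(t))\bigl(\gamma'(t),z\bigr).
\]

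The next step is to eliminate $z$ from this expression by decomposing it in the plane spanned by $\gamma(t)$ and $\gamma'(t)$. Since $\gamma$ is a unit-speed great-circle geodesic on $\mathbb{S}^{n}$ with $\gamma(0)=z$, an elementary check gives
\[
    z=\cos(t)\gamma(t)-\sin(t)\gamma'(t).
\]
One-homogeneity of $\tilde F$ makes $D\tilde F$ zero-homogeneous, so $D^2\tilde F(\gamma(t))\gamma(t)=0$. Substituting into $f'(t)$ produces the clean identity
\[
    f'(t)=-\sin(t)\,D^2\tilde F(\gamma(t))\bigl(\gamma'(t),\gamma'(t)\bigr).
\]

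The heart of the argument is then to relate $D^2\tilde F$ to the ellipticity tensor $A^F=\nabla^2F+F\sigma$ supplied by the convexity hypothesis. Differentiating the identity $\tilde F(\eta(s))=F(\eta(s))$ twice along a unit-speed geodesic $\eta$ on $\mathbb{S}^{n}$ with $\eta(0)=\xi$ and $\eta'(0)=w$, and using $\eta''(0)=-\xi$ together with Euler's relation $D\tilde F(\xi)\xi=F(\xi)$, should yield
\[
    D^2\tilde F(\xi)(w,w)=A^F(w,w)\qquad\text{for every }w\in T_\xi\mathbb{S}^{n}.
\]
Combining this identification with the assumption $A^F>0$ shows that $D^2\tilde F(\gamma(t))(\gamma'(t),\gamma'(t))>0$ for every $t$, since $\gamma'(t)$ is a unit tangent vector to $\mathbb{S}^{n}$ at $\gamma(t)$. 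Because $\sin(t)>0$ on $(0,\pi)$, this forces $f'(t)<0$ on the open interval, and continuity of $f$ on $[0,\pi]$ upgrades the inequality to strict monotonicity on the closed interval.

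I do not anticipate any serious obstacle; the entire proof collapses to a short computation once the homogeneous extension $\tilde F$ is in play. The only points requiring some care are the identification $D^2\tilde F|_{T_\xi\mathbb{S}^{n}}=A^F$, which is standard but slightly tedious, and the boundary values $t\in\{0,\pi\}$, where $\sin t=0$ makes $f'$ vanish but does not affect strict monotonicity on $[0,\pi]$.
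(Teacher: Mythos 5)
Your proposal is correct and takes essentially the same route as the paper: both compute $f'(t)$, exploit that $z$ lies in the plane spanned by $\gamma(t)$ and $\gamma'(t)$ to extract the factor $-\sin t$, and conclude from the positivity of $A^F=\nabla^2F+F\sigma$ that $f'<0$ on $(0,\pi)$. The only cosmetic difference is that you re-derive the key identity (that the derivative of $\Phi$ along $\mathbb{S}^{n}$ is given by $A^F$, via the Euclidean Hessian of $\tilde F$ and homogeneity), whereas the paper cites this fact from its preliminaries and instead projects $z$ orthogonally onto $T_{\gamma(t)}\mathbb{S}^{n}$.
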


\begin{proof}
    We embed \(S^{n}\) into \(\R^{n+1}\) and let \(z\) also denote the position vector of itself.
    Then
    \begin{equation} \label{eq:monotoncity}
        f'(t) = \inner{D_{\gamma'(t)}\Phi(\gamma(t)),z}
        = \inner{A^F_{\gamma(t)}\gamma'(t),z}
        = \inner{A^F_{\gamma(t)}\gamma'(t),z^\perp},
    \end{equation}
    where \(z^\perp\) is the orthogonal projection of \(z\) onto \(T_{\gamma(t)}S^{n}\).
    The last equality follows from the fact
    that \(A^F_{\gamma(t)}\) is an endomorphism of \(T_{\gamma(t)}S^{n}\).
    Since \(z\) always lies in the plane spanned by \(\gamma(t)\) and \(\gamma'(t)\), we have
    \[
        z = \inner{z,\gamma(t)}\gamma(t)+\inner{z,\gamma'(t)}\gamma'(t).
    \]
    Hence, \(z^\perp=\inner{z,\gamma'(t)}\gamma'(t)\).
    Go back to \eqref{eq:monotoncity},
    \[
        f'(t) = \inner{z,\gamma'(t)}\inner{A^F_{\gamma(t)}\gamma'(t),\gamma'(t)}
        = \inner{z,\gamma'(t)}\left(\nabla^2F+F\sigma\right)(\gamma'(t),\gamma'(t)),
    \]
    where  \(\nabla^2F+F\sigma\) is positive.
    Notice  \(\gamma(t)\) can be written as
    \[
        \gamma(t) = \cos t\,z+\sin t\,\gamma'(0).
    \]
    Thus,
    \[
        \gamma'(t) = -\sin t\,z+\cos t\,\gamma'(0).
    \]
    Then
    \[
        \inner{z,\gamma'(t)} = -\sin t.
    \]
    This implies that \(f'(t)<0\) for \(t\in(0,\pi)\),
    thus proving the lemma.
\end{proof}

\begin{proof}[Proof of Theorem \ref{freeHK}]
    For any \(x\in\Sigma\), let \(\kappa_i^F(x)\) be the anisotropic principal curvatures
    and \(e_i^F(x)\) be the corresponding anisotropic principal directions.
    Since \(H_1^F>0\),
    \[
        \max_i\kappa_i^F(x) \geq H_1^F(x) > 0, \quad
        \text{for all} \quad x\in\Sigma.
    \]
    Define
    \[
        Z = \left\{(x,t)\in\Sigma\times\R\mid 0<t\leq\frac{1}{\max\limits_{i}\kappa_i^F(x)}\right\}
    \]
    and the anisotropic normal translation map
    \[
        \zeta\colon Z\to\R^{n+1}, \quad (x,t)\mapsto x-t\bm{\nu}^F(x).
    \]
    We claim that \(\Omega\subset\zeta(Z)\).
    
    For any \(y\in\Omega\), we consider a family of Wulff shapes \(\{\mathcal{W}_r(y)\}_{r>0}\).
    We want to show that \(\mathcal{W}_r(y)\cap\Sigma\neq\varnothing\) for some \(r>0\).
    If \(F\) is
    non-degenerate,
    \(\{\mathcal{W}_r(y)\}_{r>0}\) forms a foliation of \(\R^{n+1}\setminus\{y\}\).
    It is clear that such an \(r\) exists.
    If \(F\) is degenerate, it is a foliation of
    \[
        \mathcal{H}_y=\{x\in\R^{n+1}\mid\inner{x-y,\xi_0}<0\},
    \]
    where \(\xi_0\) is the unique unit vector such that \(F(\xi_0)=0\).
    The existence of such an \(r\) is
    due to the fact that 
    \(\mathcal{H}_y\cap\Sigma\neq\varnothing\) 
    for any \(y\in\Omega\).
        In fact, note that 
        \(\mathcal{H}_y\) is an unbounded connected open set
        and it is clear that \(\mathcal{H}_y\cap\Omega\neq\varnothing\).
        Since \(\Omega\) is bounded, we have \(\mathcal{H}_y\cap\partial\Omega\neq\varnothing\).
        \(\partial\Omega\) can be decomposed into a disjoint union \(\Sigma\sqcup\Gamma\).
        If \(\mathcal{H}_y\cap\Sigma\neq\varnothing\), then we are done.
        If \(\mathcal{H}_y\cap\Gamma\neq\varnothing\), consider the intersection \(A=\mathcal{H}_y\cap\partial\mathbf{W}\).
        \(A\) has \(1\) or \(2\) connected components and each component is unbounded.
        Since \(\Gamma\) is bounded, we have \(A\cap\partial\Sigma=A\cap\partial\Gamma\neq\varnothing\).
    
     Now take
    \[
        r_0 = \inf\{r>0\mid\mathcal{W}_r(y)\cap\Sigma\neq\varnothing\}.
    \]
    By the above arguments, \(r_0<+\infty\).
    Since \(\mathcal{W}_r(y)\subset\Omega\) when \(r\) is small, it follows that \(r_0>0\).
    Intuitively, this can be seen as a process where the Wulff shape continues to grow larger and larger
    until it comes into contact with \(\Sigma\).
    \(r_0\) represents  the moment when \(\mathcal{W}_r(y)\) touches \(\Sigma\) for the first time.
    Let \(D\) be the open region enclosed by \(\mathcal{W}_{r_0}(y)\).
    We have
    \begin{equation} \label{eq:inc}
        D\cap \mathbf{W} \subset \Omega.
    \end{equation}
    This inclusion is crucial in later discussions.

    Let \(x\) be a point in \(\mathcal{W}_{r_0}(y)\cap\Sigma\)
    and \(\bm{\nu}_{\scriptscriptstyle\mathcal{W}}(x)\) be the unit outward normal vector 
    of \(\mathcal{W}_{r_0}(y)\) at \(x\).
    We have
    \begin{equation} \label{eq:nuwf}
        \bm{\nu}_{\scriptscriptstyle\mathcal{W}}^F(x) = \frac{x-y}{r_0}.
    \end{equation}
    Next, we will always work in the tangent space \(T_x\R^{n+1}\).
    For simplicity, we will write the vector \(\bm{\nu}(x)\) as \(\bm{\nu}\).
    The same simplification applies to \(\bm{n}_i(x),\bm{\mu}_i(x),\bm{m}_{i}(x),\bm{\tau}_i(x)\) and \(\bm{l}(x)\),
    if they exist.
    There are three distinct cases.

    \textbf{Case 1.} \(x\in\Sigma_0\).
    In this case, \(\mathcal{W}_{r_0}(y)\) is tangent to \(\Sigma_0\),
    implying \(\bm{\nu}_{\scriptscriptstyle\mathcal{W}}=\bm{\nu}\).
    Therefore, \[\bm{\nu}^F=\bm{\nu}_{\scriptscriptstyle\mathcal{W}}^F=\frac{x-y}{r_0}.\]
    By the inclusion \eqref{eq:inc}, we have
    \[
        \kappa_i^F \leq \frac{1}{r_0}, \quad \text{for all}
        \quad i=1,\ldots,n.
    \]
    Hence, \((x,r_0)\in Z\) and \(y=\zeta(x,r_0)\in\zeta(Z)\).

    \textbf{Case 2.} \(x\in\Sigma_1\).
    Suppose \(x\in\partial_i\Sigma\), where \(i=1\) or \(2\).
    Then \(\mathcal{W}_{r_0}(y)\) is tangent to \(\partial_i\Sigma\).
    This implies that \(\bm{\nu}_{\scriptscriptstyle\mathcal{W}}\)
    lies in the plane spanned by \(\bm{\nu}\) and \(\bm{\mu}_i\).
    Hence, \(\bm{\nu},\bm{n}_i,\bm{\mu}_i,\bm{m}_{i},\bm{\nu}_{\scriptscriptstyle\mathcal{W}}\)
    all belong to the same plane.
    On one hand, by \eqref{eq:nuwf},
    \begin{equation} \label{eq:case2_ineq}
        \inner{\bm{\nu}_{\scriptscriptstyle\mathcal{W}}^F,\bm{n}_i} = -\frac{\inner{y,\bm{n}_i}}{r_0} > 0.
    \end{equation}
    On the other hand, by the inclusion \eqref{eq:inc}, we have
    \begin{equation} \label{eq:ang_ineq}
        \inner{\bm{\nu}_{\scriptscriptstyle\mathcal{W}},\bm{\mu}_i}\leq 0 \quad \text{and}
        \quad \inner{\bm{\nu}_{\scriptscriptstyle\mathcal{W}},\bm{m}_{i}}\leq 0.
    \end{equation}
    In fact, there is a smooth curve \(\alpha\)
    starting at \(x\) with initial velocity \(-\bm{\mu}_i\) which lies inside \(\Sigma\).
    By \eqref{eq:inc}, \(\alpha\) lies outside \(D\).
    Hence, \(\inner{\bm{\nu}_{\scriptscriptstyle\mathcal{W}},\bm{\mu}_i}\leq 0\).
    A similar argument gives \(\inner{\bm{\nu}_{\scriptscriptstyle\mathcal{W}},\bm{m}_{i}}\leq 0\).
    Surprisingly, if we represent \(\bm{\nu},\bm{n}_i,\bm{\nu}_{\scriptscriptstyle\mathcal{W}}\) as points on \(S^1\),
    \(\eqref{eq:ang_ineq}\) is equivalent to the statement
    that \(\bm{\nu}_{\scriptscriptstyle\mathcal{W}}\) lies on the length minimizing geodesic
    connecting \(\bm{\nu}\) and \(-\bm{n}_i\),
    or equivalently,
    that \(\bm{\nu}\) lies on the length minimizing geodesic
    connecting \(\bm{n}_i\) and \(\bm{\nu}_{\scriptscriptstyle\mathcal{W}}\).
    See Figure \ref{fig:case2} for an illustration.

    \begin{figure}[h]
        \includegraphics{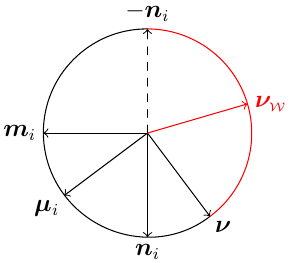}
        \caption{\(\bm{\nu}_{\scriptscriptstyle\mathcal{W}}\) must lies on the red arc.}
        \label{fig:case2}
    \end{figure}
        
    Therefore, by set $z=\bm{n}_{i}$ in Lemma \ref{lem:monotoncity}, we have
    \[
        \inner{\bm{\nu}_{\scriptscriptstyle\mathcal{W}}^F,\bm{n}_i} \leq \inner{\bm{\nu}^F,\bm{n}_i}=\omega^i \leq 0,
    \]
    which contradicts with \eqref{eq:case2_ineq}.
    This means that case 2 can not happen.

    \textbf{Case 3.} \(x\in\Sigma_2\).
    In this case, \(\mathcal{W}_{r_0}(y)\) is tangent to \(\Sigma_2\).
    Hence, \(\bm{\nu}_{\scriptscriptstyle\mathcal{W}}\) lies in the normal space of \(\Sigma_2\),
    which is a \(3-\)dimensional linear subspace containing \(\bm{\nu},\bm{n}_1,\bm{n}_2,\bm{\tau}_1,\bm{\tau}_2,\bm{l}\).
       
    On one hand, by \eqref{eq:nuwf}, we have
    \begin{equation} \label{eq:case3_ineq}
        \inner{\bm{\nu}_{\scriptscriptstyle\mathcal{W}}^F,\bm{n}_i} > 0, \quad
        \text{for} \quad i=1,2.
    \end{equation}
    On the other hand, by the inclusion \eqref{eq:inc}, we have
    \begin{equation} \label{eq:ang_ineq2}
        \inner{\bm{\nu}_{\scriptscriptstyle\mathcal{W}},\bm{\tau}_i}\leq 0 \quad
        \text{for} \quad i=1,2 \quad
        \text{and} \quad \inner{\bm{\nu}_{\scriptscriptstyle\mathcal{W}},\bm{l}}\leq 0.
    \end{equation}
    This follows from the same argument as in case 2.
    If we represent \(\bm{\nu},\bm{n}_1,\bm{n}_2,\bm{\nu}_{\scriptscriptstyle\mathcal{W}}\) as points in \(S^2\),
    \eqref{eq:ang_ineq2} is equivalent to the statement that
    \(\bm{\nu}_{\scriptscriptstyle\mathcal{W}}\) lies in
    the geodesic triangle with vertices \(\bm{\nu},-\bm{n}_1,-\bm{n}_2\)
    whose three sides are all length minimizing.
    See Figure \ref{fig:case3} for an illustration.

    \begin{figure}[h]
        \includegraphics{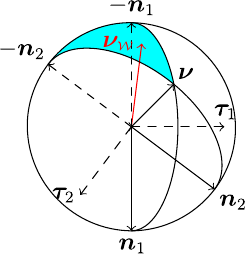}
        \caption{\(\bm{\nu}_{\scriptscriptstyle\mathcal{W}}\) must lies on the geodesic triangle with vertices \(-\bm{n}_1,-\bm{n}_2\) and \(\bm{\nu}\).
        The vector \(\bm{l}\) does not occur in this figure because it points perpendicularly into the paper.}
        \label{fig:case3}
    \end{figure}

    Denote this geodesic triangle by \(\Delta\).
    We also denote the length minimizing geodesic
    connecting \(\bm{n}_1\) and \(\bm{n}_2\) by \( \gamma \)
    and suppose \(\eta\) is a geodesic connecting
    \(\bm{\nu}_{\scriptscriptstyle\mathcal{W}}\) and \(\bm{\nu}\)
    which lies inside \(\Delta\).
    If we extend \(\eta\), it will intersect with \( \gamma \) at a certain point \(\bm{n}\).
    Moreover, \(\bm{n}\) can be written as
    \[
        \bm{n} = \lambda_1\bm{n}_1+\lambda_2\bm{n}_2 \quad
        \text{with} \quad \lambda_1,\lambda_2\geq 0.
    \]
    By \eqref{eq:case3_ineq}, we have
    \[
        \inner{\bm{\nu}_{\scriptscriptstyle\mathcal{W}}^F,\bm{n}} > 0.
    \]
    However, if we apply Lemma \ref{lem:monotoncity} with $z=\bm{n}$ to \(\eta\), we obtain
    \[
        \inner{\bm{\nu}_{\scriptscriptstyle\mathcal{W}}^F,\bm{n}}
        \leq \inner{\bm{\nu}^F,\bm{n}}
        = \lambda_1\omega_1+\lambda_2\omega_2\leq 0.
    \]
    This is a contradiction.
    Therefore, case 3 can not happen either.
    The claim is thus proved.

    Next, by direct computations, 
    the Jacobian of \(\zeta\) along \(Z\) at \((x,t)\) is
    \[
        \mathcal{J}\zeta(x,t) = F(\bm{\nu})\prod_{i=1}^{n}(1-t\kappa_i^F).
    \]
    By \(\Omega\subset\zeta(Z)\), the area formula yields
    \begin{align*}
        |\Omega| \leq \int_Z\mathcal{J}\zeta \,\mathrm{d}t\,\mathrm{d}A 
        & = \int_\Sigma\int_0^{\frac{1}{\max\limits_{i}{\kappa_i^F}}}
        F(\bm{\nu})\prod_{i=1}^{n}(1-t\kappa_i^F)\,\mathrm{d}t\,\mathrm{d}A \\
        & \leq \int_\Sigma\int_0^{\frac{1}{\max\limits_{i}{\kappa_i^F}}}F(\bm{\nu})(1-tH_1^F)^{n}\,
        \mathrm{d}t\,\mathrm{d}A \\
        & \leq \int_\Sigma\int_0^{\frac{1}{H_1^F}}F(\bm{\nu})(1-tH_1^F)^{n}\,\mathrm{d}t\,\mathrm{d}A \\
        & = \int_\Sigma\frac{F(\bm{\nu})}{(n+1)H_1^F}\,\mathrm{d}A.
    \end{align*}
        
    The second inequality comes from the AM-GM inequality.
    The third inequality follows from the fact that \(\max\limits_{i}{\kappa_i^F}\geq H_1^F>0\).
    If the equality holds, then \(\kappa_1^F(x)=\cdots=\kappa_{n}^F(x)\)
    for all \(x\in\Sigma\).
    It follows from  \cite{HeLi-Minkowski, Palmer} that \(\Sigma\) must be a part of a Wulff shape \(\mathcal{W}_r(y)\).
    If \(\partial_i\Sigma\neq\varnothing\), let \(x\) be a point in \(\partial_i\Sigma\).
    Then
    \[
        \inner{\bm{\nu}^F(x),\bm{n}_i} = \inner{\frac{x-y}{r},\bm{n}_i} = -\frac{\inner{y,\bm{n}_i}}{r},
    \]
    which is a constant. Denote it by \(\omega^i_0\).
    By the assumption that \(\Sigma\) is \(\bm{\omega}\)-capillary and \(\omega^i\leq 0\),
    we have \(\omega^i_0\leq 0\).
    If \(\omega^i_0<0\), it is easy to see that the difference of \(\Omega\) and \(\zeta(Z)\)
    have a positive measure,
    which violates the equality in Heintze-Karcher inequality.
    Hence, \(\omega^i_0=0\), i.e. \(\Sigma\) is a truncated Wulff shape with a free boundary.
\end{proof}

\begin{remark}
    The exclusion of Case 2 in the aforementioned proof is inherently consistent with the reasoning employed in \cite{JWXZ2} for the half-space case.
\end{remark}

\begin{proof}[Proof of Theorem \ref{capiHKineq}]

Consider $$\bar{F}(\xi):=F(\xi)-\langle \xi, \bm{k}^{F} \rangle.$$
Then the associated Cahn-Hoffman map \(\bar{\Phi}\) satisfies
\(\bar{\Phi}(x)=\Phi(x)-\bm{k}^F\), for \(x\in S^n\) and the unit Wulff shape with respect to \(\bar{F}\) is \(
\mathcal{W}_1(-\bm{k}^F)\).
Thus
$$\bm{\nu}^{\bar{F}}=\bar{\Phi}(\nu)=\Phi(\nu)-\bm{k}^F=\bm{\nu}^{F}-\bm{k}^{F}.$$
Notice that $\bar{F}$ is still a gauge. 
In fact, by $\bm{k}^{F}\in K$, it is easy to verify that $\bar{F}\in C^{2}(\mathbb{S}^n)$ and $\bar{F}(\xi)\geq0$. 
Furthermore,
$$\bar{F}_{ij}+\bar{F}\sigma_{ij}=F_{ij}+F \sigma_{ij}.$$
Hence,
$$\nabla^2\bar{F}(x)+\bar{F}(x)\sigma=\nabla^2F(x)+F(x)\sigma>0.$$
The conditions $\langle \bm{\nu}^{F}, \bm{n}_{i} \rangle=\omega^{i}$ and $\langle \bm{k}^{F}, \bm{n}_{i} \rangle=\omega_{0}^{i}$
imply $$\langle \bm{\nu}^{\bar{F}}, \bm{n}_{i} \rangle=\langle \bm{\nu}^{F},  \bm{n}_{i} \rangle-\langle k ^{F},\bm{n}_{i} \rangle
=\omega^i-\omega_0^i\leq 0. $$
This means that, with respect to the gauge \(\bar{F}\),  $\Sigma$ is an embedded compact strictly anisotropic mean convex anisotropic $\bar{\bm{\omega}}$-capillary hypersurface with $\bar{\omega}^{i}=\omega^i-\omega^i_0 \leq 0$ in the wedge. 

Since \(\mathrm{d}\bar{\Phi}=\mathrm{d}\Phi\), we have $H^{\bar{F}}_1=H_1^{F}$.
From the inequality in Theorem \ref{freeHK}, we immediately obtain the inequality in Theorem \ref{capiHKineq}. As for the equalities in Theorem \ref{capiHKineq}, we have known that $\Sigma$ is an anisotropic free boundary truncated Wulff shape associated with $\bar{F}$.

Then, from $$\bar{\Phi}(\mathbb{S}^{n})+\bm{k}^{F}=\Phi(\mathbb{S}^{n}),$$ 
we conclude that $\Sigma$ is an anisotropic $\bm{\omega}_{0}$-capillary truncated Wulff shape associated with $F$.  This completes the proof of Theorem \ref{capiHKineq}.
\end{proof}

\section{Alexandrov type theorem}\label{alexthmsec}

In this section, we begin by using the results from section \ref{HKineqsec} to establish the Alexandrov theorem for the free boundary case. Subsequently, we introduce a valuable idea and combine it with the above results to directly derive the corresponding result for capillary hypersurfaces.

Theorems \ref{capiHKineq} and \ref{capialex} rely on a constant vector \(\bm{k}^F\) satisfying \eqref{condition:kf}.
Such \(\bm{k}^F\) does not exist in general.  
Before proceeding with the proofs of the Theorem 
\ref{capialex}, it is essential to discuss the existence of the constant vector \(\bm{k}^F\). 
   
\begin{lemma}\label{existkF}
    \label{lem:subcase}
    Suppose there exists a constant vector \(\bm{k}^F\) satisfying \eqref{condition:kf}. Then only one of the following two cases can occur.
    \begin{enumerate}
        \item There exists a constant vector \(\tilde{\bm{k}}^F\) (which may differ from \(\bm{k}^F\))  satisfying  \eqref{condition:kf} and \(\tilde{\bm{k}}^F\in\mathring{K}\).
            This case occurs if and only if the \(\bm{\omega}_0\)-capillary
            Wulff shape in \(\overline{\mathbf{W}}\) intersects with \(L\) transversally.
        \item \(\bm{k}^F\) is the unique constant vector satisfying \eqref{condition:kf} and \(\bm{k}^F\in\mathcal{W}\).
            This case occurs if and only if the \(\bm{\omega}_0\)-capillary
            Wulff shape in \(\overline{\mathbf{W}}\) is tangent to \(L\).
    \end{enumerate}
\end{lemma}

\begin{proof}
    Since there exists a constant vector \(\bm{k}^F\) satisfying \eqref{condition:kf},
    \(\mathcal{W}_1(-\bm{k}^F)\) is an \(\omega_0\)-capillary truncated Wulff shape and  \(\bm{0}\) lies in the closed region enclosed by \(\mathcal{W}_1(-\bm{k}^F)\).
    \(L\) is a
    linear subspace which contains \(\bm{0}\).
    Thus, \(\mathcal{W}_1(-\bm{k}^F)\cap L\neq\varnothing\).
    Then either \(\mathcal{W}_1(-\bm{k}^F)\) intersects \(L\) transversally or \(\mathcal{W}_1(-\bm{k}^F)\) is tangent to \(L\).

    Suppose \(\mathcal{W}_1(-\bm{k}^F)\) intersects with \(L\) transversally.
    Then there are at least two distinct points \(x_1,x_2\in\mathcal{W}_1(-\bm{k}^F)\cap L\).
    Let
    \[
        \tilde{\bm{k}}^F
        =\frac{1}{2}\parens*{(\bm{k}^F+x_1)+(\bm{k}^F+x_2)}
        =\bm{k}^F+\frac{1}{2}(x_1+x_2).
    \]
    It is straightforward to check that \(\inner{\tilde{\bm{k}}^F,\bm{n}_i}=\omega_0^i\).
    On the other hand, since \(x_1,x_2\in\mathcal{W}_1(-\bm{k}^F)\),
    we have \(\bm{k}^F+x_1,\bm{k}^F+x_2\in\mathcal{W}\).
    By the strict convexity of \(K\),
    \(\tilde{\bm{k}}^F\in\mathring{K}\).

    Suppose \(\mathcal{W}_1(-\bm{k}^F)\) is tangent to \(L\).
    Then \(\mathcal{W}\) is tangent to \(\bm{k}^F+L\).
    Note that
    \[
        \bm{k}^F+L=\{x\in\R^{n+1}\mid\inner{x,\bm{n}_i}=\omega_0^i\text{ for }i=1,2\}
    \]
    is an \((n-1)\)-dimensional affine subspace of \(\R^{n+1}\).
    The strict convexity of \(K\) implies that \(\bm{k}^F\) is the unique point in \(\mathcal{W}\cap(\bm{k}^F+L)\).
\end{proof}

For the first case, let \(\bar{F}(x)=F(x)-\inner{\tilde{\bm{k}}^F,x}\).
Then \(\bar{F}\) is a nondegenerate gauge.
For the second case, let \(\bar{F}(x)=F(x)-\inner{\bm{k}^F,x}\).
Then \(\bar{F}\) is a gauge that degenerates at \(\bm{k}=\Phi^{-1}(\bm{k}^F)\).
Moreover, \(\bm{k}\) is the outward unit normal vector of \(\mathcal{W}\) at \(\bm{k}^F\).
Since \(\mathcal{W}\) is tangent to \(\bm{k}^F+L\), \(\bm{k}\in\op{span}\{\bm{n}_1,\bm{n}_2\}\).
The Cahn-Hoffman map of \(\bar{F}\) is \(\bar{\Phi}(x)=\Phi(x)-\tilde{\bm{k}}^F\) or \(\Phi(x)-\bm{k}^F\) and \(\mathrm{d}\bar{\Phi}=\mathrm{d}\Phi\).
Suppose \(\Sigma\subset\overline{\mathbf{W}}\) is an anisotropic \(\omega_0\)-capillary hypersurface with respect to \(F\).
Then, with respect to \(\bar{F}\), \(\Sigma\) is a hypersurface with anisotropic free boundary, and \(H_1^{\bar{F}}=H_1^F\).

\begin{proposition}\label{ellipticpoint}
    Let \(\Sigma\subset \overline{\mathbf{W}}\) be a smooth
    compact embedded anisotropic free boundary hypersurface.
    Suppose that $F$ is nondegenerate.
    Then \(\Sigma\) has at least one point at which all the anisotropic principal curvatures are positive.
\end{proposition}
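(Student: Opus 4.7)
The plan is to locate an elliptic point by comparing $\Sigma$ with a circumscribing Wulff shape, adapting the classical argument for closed hypersurfaces to the wedge setting with a capillary boundary. I would begin by fixing an auxiliary point $y_{0}\in\mathbf{W}$ and considering the anisotropic distance function $h(x):=F^{o}(x-y_{0})$ on the compact hypersurface $\Sigma$. Let $R:=\max_{\Sigma}h$, attained at some $x_{0}\in\Sigma$. Then $\Sigma$ lies inside the closed anisotropic ball $\overline{D_{R}(y_{0})}=\{x:F^{o}(x-y_{0})\leq R\}$, and its boundary Wulff shape $\mathcal{W}_{R}(y_{0})$ touches $\Sigma$ at $x_{0}$.

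The heart of the proof, and what I expect to be the main obstacle, is to show that $x_{0}\in\Sigma_{0}$. I would proceed by contradiction, in parallel with Cases 2 and 3 of the proof of Theorem \ref{freeHK}, now with the inscribed Wulff shape there replaced by the circumscribed one (so that several sign conventions and inequalities flip). Suppose $x_{0}\in\Sigma_{1}\cap P_{i}$. The inclusion $\Sigma\subset\overline{D_{R}(y_{0})}$, combined with a smooth curve in $\Sigma$ starting at $x_{0}$ with initial velocity $-\bm{\mu}_{i}$, yields the Hopf-type bound $\inner{\bm{\nu}_{\scriptscriptstyle\mathcal{W}}(x_{0}),\bm{\mu}_{i}}\geq 0$, and analogously for $\bm{m}_{i}$; these force $\bm{\nu}_{\scriptscriptstyle\mathcal{W}}(x_{0})$ onto a definite geodesic arc in the normal circle to $\partial_{i}\Sigma$. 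Combined with the identity $\bm{\nu}_{\scriptscriptstyle\mathcal{W}}^{F}=(x_{0}-y_{0})/R$, the capillary condition $\inner{\bm{\nu}^{F},\bm{n}_{i}}=\omega^{i}$, and a judicious choice of $y_{0}$ (for example taken deep inside the wedge so that $\inner{y_{0},\bm{n}_{i}}$ has the appropriate sign and size), the monotonicity Lemma \ref{lem:monotoncity} should deliver the desired contradiction. The corner case $x_{0}\in\Sigma_{2}\subset L$ is treated analogously using the three-dimensional geodesic-triangle argument from Case 3 of Theorem \ref{freeHK}.

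Once $x_{0}\in\Sigma_{0}$ is secured, the conclusion is standard: the hypersurfaces $\Sigma$ and $\mathcal{W}_{R}(y_{0})$ are tangent at $x_{0}$ with $\Sigma$ lying locally on the inside, so $\bm{\nu}(x_{0})=\bm{\nu}_{\scriptscriptstyle\mathcal{W}}(x_{0})$ and the anisotropic shape operator of $\Sigma$ at $x_{0}$ dominates that of the Wulff shape. Hence every anisotropic principal curvature of $\Sigma$ at $x_{0}$ is bounded below by $1/R>0$, as required. As the paper notes, the same strategy will apply verbatim to the capillary case treated later in the section; only the choice of $y_{0}$ and the bookkeeping of the resulting inequalities change.
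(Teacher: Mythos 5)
Your strategy (circumscribe by a Wulff shape, then compare curvatures at the touching point) is the right family of arguments, but the step you yourself identify as the heart of the proof --- forcing the farthest point \(x_0\) into the interior \(\Sigma_0\) by contradiction --- cannot work, and this is a genuine gap. With \(y_0\) in the open wedge, the external-touching angle inequalities \(\inner{\bm{\nu}_{\mathcal{W}},\bm{\mu}_i}\geq 0\), \(\inner{\bm{\nu}_{\mathcal{W}},\bm{m}_i}\leq 0\) place \(\bm{\nu}_{\mathcal{W}}\) on the minimizing geodesic from \(\bm{n}_i\) to \(\bm{\nu}\), so Lemma \ref{lem:monotoncity} with \(z=\bm{n}_i\) gives \(\inner{\bm{\nu}_{\mathcal{W}}^F,\bm{n}_i}\geq\inner{\bm{\nu}^F,\bm{n}_i}=0\), while the identity \(\bm{\nu}_{\mathcal{W}}^F=(x_0-y_0)/R\) gives \(\inner{\bm{\nu}_{\mathcal{W}}^F,\bm{n}_i}=-\inner{y_0,\bm{n}_i}/R>0\). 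These two facts are perfectly compatible, for every choice of \(y_0\) in \(\mathbf{W}\); no ``judicious'' placement deep inside the wedge produces a contradiction. (Contrast with the proof of Theorem \ref{freeHK}: there the Wulff shape touches from \emph{inside}, the geodesic ordering is reversed, and monotonicity yields \(\inner{\bm{\nu}_{\mathcal{W}}^F,\bm{n}_i}\leq 0\), which does clash with the strict positivity; you have transplanted that contradiction to a setting where the inequality flips.) Moreover, boundary touching genuinely occurs: in the isotropic case let \(\Sigma\) be a spherical cap centered at a point of \(L\) meeting both faces orthogonally (a free boundary truncated Wulff shape); for any \(y_0\in\mathbf{W}\) other than the center, the point of \(\Sigma\) farthest from \(y_0\) lies on \(\partial\Sigma\). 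So no argument can rule out \(x_0\in\Sigma_1\cup\Sigma_2\).

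The paper's resolution, and the idea your proposal is missing, is to place the center on the edge: fix \(y\in L\), not in \(\mathbf{W}\). Then \(\inner{y,\bm{n}_i}=0\), so at any touching point \(x_0\in\partial\Sigma\subset\partial\mathbf{W}\) one has \(\inner{\bm{\nu}_{\mathcal{W}}^F(x_0),\bm{n}_i}=\inner{(x_0-y)/r_0,\bm{n}_i}=0=\inner{\bm{\nu}^F(x_0),\bm{n}_i}\) (this is \eqref{tangent}): the circumscribed Wulff shape automatically satisfies the \emph{same} free boundary condition as \(\Sigma\). One then does not exclude boundary touching at all; instead, equality at the two ends of the geodesic, combined with the \emph{strict} monotonicity in Lemma \ref{lem:monotoncity}, forces \(\bm{\nu}_{\mathcal{W}}(x_0)=\bm{\nu}(x_0)\), i.e.\ genuine tangency, in Cases 2 and 3 alike, and your final comparison step \(\kappa_i^F\geq 1/r_0>0\) then applies at boundary touching points exactly as at interior ones. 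In short: your proof is repaired by replacing ``judicious \(y_0\in\mathbf{W}\)'' with ``\(y\in L\)'' and replacing ``contradiction'' with ``tangency'' in the boundary cases; your closing remark about the capillary case is fine, since the paper reduces it to the free boundary case via the norm \(\bar F(\xi)=F(\xi)-\inner{\xi,\bm{k}^F}\).
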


\begin{proof}
    Fix a point \(y\in\R^{n+1}\) such that \(-y\in\mathbf{W}\).
    Consider the family of Wulff shapes \(\mathcal{W}_r(y)\) centred at \(y\).
    It is important to note that for any point
    \(x\in\partial\Sigma\cap
\mathcal{W}_r(y)\subset\partial\mathbf{W}\),
    the following relationship holds:
    \begin{equation}\label{tangent}
        \inner{\bm{\nu}^F_{\scriptscriptstyle\mathcal{W}}(x),\bm{n}_{i}}
        = \inner*{\frac{x-y}{r},\bm{n}_{i}}
        < 0.
    \end{equation}

    Since \(\Sigma\) is compact and $F$ is nondegenerate,
    there exists a sufficiently large radius \(r\) such that \(\Sigma\)
    is entirely enclosed by the Wulff shape \(\mathcal{W}_r(y)\).
    Consequently, we can find the smallest radius \(r_0>0\) at which
    \(\mathcal{W}_{r_0}(y)\) makes initial contact with \(\Sigma\)
    from the exterior at a point \(x_0\in\Sigma\).
    Let \(D\) be the open region enclosed by \(\mathcal{W}_{r_0}(y)\).
    We have
    \begin{equation} \label{eq:exc}
        \Omega\subset D\cap\mathbf{W} .
    \end{equation}

    Now, there are three cases regarding the position of \(x_{0}\).
    The proof is similar to the analysis in the Heintze-Karcher inequality above,
    and we will briefly describe it here.

    \textbf{Case 1.} \(x_0\in\Sigma_0\).
    Then \(\Sigma\) and \(\mathcal{W}_{r_0}(y)\) are tangent at \(x_{0}\).

    \textbf{Case 2.} \(x_{0}\in\Sigma_1\).
    Suppose \(x_{0}\in\partial_i\Sigma\).
    Then \(\mathcal{W}_{r_0}(y)\) is tangent to \(\partial_i\Sigma\),
    implying that \(\bm{\nu}_{\scriptscriptstyle\mathcal{W}}\) lies in the plane spanned by \(\bm{\nu}\) and \(\bm{\mu}_i\).
    Hence, \(\bm{\nu},\bm{\mu}_i, \bm{n}_i,\bm{m}_{i},\bm{\nu}_{\scriptscriptstyle\mathcal{W}}\) all belong to the same plane.
       
    By the inclusion \eqref{eq:exc}, we have
    \begin{equation} \label{exeq:ang_ineq}
        \inner{\bm{\nu}_{\scriptscriptstyle\mathcal{W}},\bm{\mu}_i}\geq 0 \quad \text{and}
        \quad \inner{\bm{\nu}_{\scriptscriptstyle\mathcal{W}},\bm{m}_{i}}\leq 0.
    \end{equation}
    If we regard \(\bm{\nu},\bm{n}_i,\bm{\nu}_{\scriptscriptstyle\mathcal{W}}\) as points on \(S^1\),
    \(\eqref{exeq:ang_ineq}\) is equivalent to the statement
    that \(\bm{\nu}\) lies on the length minimizing geodesic
    connecting \(\bm{\nu}_{\scriptscriptstyle\mathcal{W}}\) and \(-\bm{n}_i\),
    or equivalently,
    that  \(\bm{\nu}_{\scriptscriptstyle\mathcal{W}}\) lies on the length minimizing geodesic
    connecting \(\bm{n}_i\) and \(\bm{\nu}\).
    See Figure \ref{fig:case2a} for an illustration.

    \begin{figure}[h]
        \includegraphics{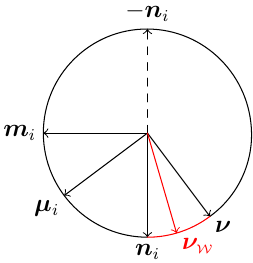}
        \caption{\(\bm{\nu}_{\scriptscriptstyle\mathcal{W}}\) must lies on the red arc.}
        \label{fig:case2a}
    \end{figure}

    Therefore, by setting \(z=\bm{n}_{i}\) in Lemma \ref{lem:monotoncity}, we have
    \[
        \inner{\bm{\nu}_{\scriptscriptstyle\mathcal{W}}^F,\bm{n}_i} \geq \inner{\bm{\nu}^F,\bm{n}_i}=0,
    \]
    Which contradicts with \eqref{tangent}.
    This means that case 2 can not happen.

    \textbf{Case 3.} \(x_{0}\in\Sigma_2\).
    In this case, \(\mathcal{W}_{r_0}(y)\) is tangent to \(\Sigma_2\).
    Hence, \(\bm{\nu}_{\scriptscriptstyle\mathcal{W}}\) lies in the normal space of \(\Sigma_2\),
    which is a \(3-\)dimensional linear subspace containing \(\bm{\nu},\bm{n}_1,\bm{n}_2,\bm{\tau}_1,\bm{\tau}_2,\bm{l}\).
       
    By the inclusion \eqref{eq:exc}, we have
    \begin{equation} \label{exeq:ang_ineq2}
        \inner{\bm{\nu}_{\scriptscriptstyle\mathcal{W}},\bm{\tau}_i}\geq 0 \quad
        \text{and} \quad \inner{\bm{\nu}_{\scriptscriptstyle\mathcal{W}},\bm{l}}\leq 0.
    \end{equation}
    If we regard \(\bm{\nu},\bm{n}_1,\bm{n}_2,\bm{\nu}_{\scriptscriptstyle\mathcal{W}}\) as points in \(S^2\),
    \eqref{exeq:ang_ineq2} is equivalent to the statement that
    \(\bm{\nu}_{\scriptscriptstyle\mathcal{W}}\) lies in
    the geodesic triangle with vertices \(\bm{\nu},\bm{n}_1,\bm{n}_2\)
    whose three sides are all length minimizing.

    Denote this geodesic triangle by \(\Delta\).
    Also denote the length minimizing geodesic connecting \(\bm{n}_1\) and \(\bm{n}_2\) by \( \gamma \).
    Suppose \(\eta\) is a geodesic connecting \(\bm{\nu}_{\scriptscriptstyle\mathcal{W}}\) and \(\bm{\nu}\)
    which lies inside \(\Delta\).
    If we extend \(\eta\), it will intersect with \( \gamma \) at some point \(\bm{n}\).
    See Figure \ref{fig:case3a} for an illustration.
    
    \begin{figure}[h]
        \includegraphics{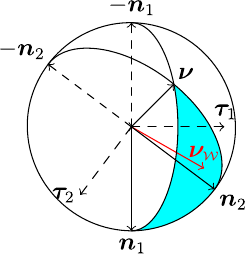}
        \caption{\(\bm{\nu}_{\scriptscriptstyle\mathcal{W}}\) must lies on the geodesic triangle with vertices \(\bm{n}_1,\bm{n}_2\) and \(\bm{\nu}\).
        }
        \label{fig:case3a}
    \end{figure}

    \(\bm{n}\) can be written as
    \[
        \bm{n} = \lambda_1\bm{n}_1+\lambda_2\bm{n}_2 \quad
        \text{with} \quad \lambda_1,\lambda_2\geq 0.
    \]
    By \eqref{tangent}, we have
    \[
        \inner{\bm{\nu}_{\scriptscriptstyle\mathcal{W}}^F,\bm{n}} < 0.
    \]
    However, if we apply Lemma \ref{lem:monotoncity} with $z=\bm{n}$ to \(\eta\), we obtain
    \[
       \inner{\bm{\nu}_{\scriptscriptstyle\mathcal{W}}^F,\bm{n}} \geq \inner{\bm{\nu}^F,\bm{n}} = 0.
    \]
    This is a contradiction.
    Hence, case 3 can not happen either.

    By the above discussion, \(\Sigma\) and \(\mathcal{W}_{r_0}(y)\) are tangent at a point \(x_0\in\Sigma_0\).
    Then the inclusion \eqref{eq:exc} implies that the anisotropic principal curvatures of \(\Sigma\)
    at \(x_0\) are larger than or equal to \(\frac{1}{r_0}\).
\end{proof}

   \begin{proposition}\label{HFpositive}
        Let \(\Sigma\subset \overline{\mathbf{W}}\) be a smooth
        compact embedded anisotropic free boundary hypersurface with constant anisotropic mean curvature $H_1^F$.
        If there exists $\xi\in S^n\cap \mathrm{span}\{\bm{n}_1, \bm{n}_2\}$ such that $F(\xi)=0$, then 
       $H_1^F> 0$.
    \end{proposition} 

    \begin{proof}
        Let \(\bm{l}_1,\bm{l}_2\) be vectors in the normal space \(N_{\bm{0}}L\) of $L$ such that
        \[
            \inner{\bm{l}_i,\bm{n}_i}=0 \quad \text{and} \quad \inner{\bm{l}_i,\bm{n}_{3-i}}<0 \quad
            \text{for \(i=1,2.\)}
        \]
        Now we consider a family of half-spaces
        \[
            \mathcal{H}_a=\{x\in\R^{n+1}\mid\inner{x,\xi}<a\}, \quad a\in\R
        \]
        
        Case 1: \(\inner{\xi,\bm{l}_i}\leq0\) for \(i=1,2\).
        In this case, \(\overline{\mathbf{W}}\subset\overline{\mathcal{H}}_0\).
        We can use the same method as Proposition \ref{ellipticpoint} to get the result.

        Case 2: \(\inner{\xi,\bm{l}_i}>0\) for \(i=1\) or \(2\).
        By the compactness of $\overline{\Omega}$, for \(a\) large enough, \(\Omega\subset\mathcal{H}_a\).
        Let
        \[
            a_0=\sup\{a\in\R\mid\partial\mathcal{H}_a\cap\Sigma\neq\varnothing\}.
        \]
        Then we have
        \[
            \partial\mathcal{H}_{a_0}\cap\Sigma\neq\varnothing \quad
            \text{and} \quad \Omega\subset\mathcal{H}_{a_0}.
        \]

        Let \(x_0\) be a point of \(\partial\mathcal{H}_{a_0}\cap\Sigma\).
        If \(x_0\in\mathring{\Sigma}\), then \(\partial\mathcal{H}_{a_0}\)
        is tangent to \(\Sigma\) at \(x_0\).
        If \(x_0\in\partial\Sigma\), assume \(x_0\in\partial_i\Sigma\).
        Then, at \(x_0\), we have
        \[
            \inner{\xi,\bm{m}_i}\leq 0, \quad \inner{\xi,\bm{\mu}_i}\geq 0.
        \]
        These two inequalities are obtained from the inclusion \(\Omega\subset\mathcal{H}_{a_0}\).
        Note that
        \[
            \inner{\Phi(\xi),\bm{n}_i}=
0=\inner{\bm{\nu}^F,\bm{n}_i}.
        \]
        By Lemma \ref{lem:monotoncity}, \(\xi=\bm{\nu}\) which means that
        \(\partial\mathcal{H}_{a_0}\) is still tangent to \(\Sigma\) at \(x_0\).

        Recall that if a hypersurface $\Sigma\in\mathbb{R}^{n+1}$ is given by a smooth function  $u$ defined on a domain of $\mathbb{R}^n$,  with respect to the upper unit normal vector field $\nu=\frac{(-\nabla u, 1)}{\sqrt{1+|\nabla u|^2}}$, the anisotropic mean curvature of $\Sigma$ is given by  (See for example \cite{W06})
\begin{equation}\label{eq:H^F-graph}
\sum_{i=1}^n \frac{\partial}{\partial x_i}(\frac{\partial f}{\partial p_i}(\nabla u))=H_1^F,
\end{equation}
where $f(p)=F(-p,1)$ for any $p\in \mathbb{R}^n$. 
From the assumption \eqref{cond:convex} on $F$, we see that  $H_1^F=const.$ is a quasilinear elliptic equation. 

We claim that $H_1^F > 0$. Otherwise, suppose that $H_1^F\leq 0$. Let $u_1,u_2$ be the graph functions of the hyperplane $\partial \mathcal{H}_{a_0}$ and $\Sigma$ in a neighborhood $U$ of $x_0$, respectively, and $\Sigma $ touches $\partial \mathcal{H}_{a_0}$ at $x_0$. Let $w:=u_1-u_2$. Then $w$ attains its nonnegative maximum $0$ at $x_0$ and $Lw\geq 0$ for a uniformly elliptic operator $L$.

If $x_0$ is an interior point of $\Sigma$, by the strong maximum principle, $w$ is constant.
Additionally, since $w(x_0)=0$, it follows that $u_2=u_1$.
This implies that $\Sigma$ and $\partial \mathcal{H}_{a_0}$ agree in $U$.
When \( H_1^F = \text{constant} \), this method can be  applied at points on \( \partial U \cap \mathring{\Sigma} \) to deduce that \( \Sigma \) and \( \partial \mathcal{H}_{a_0} \) agree in a larger domain $\tilde{U}$ such that $U\subset\subset \tilde{U}$.
By continuously repeating this process, we can ensure that \( \Sigma \) and \( \partial \mathcal{H}_{a_0} \) are in agreement within \( \overline{\mathbf{W}} \).
This results in a contradiction to the assumptions on \(\Sigma\) as outlined in Subsection \ref{Geometric setting}.

If $x_0\in \partial \Sigma$, according to the Hopf lemma,  $w$ is either constant or $\frac{\partial w}{\partial \nu}(x_0)>0$, where $\nu$ denotes the unit outward normal vector field along the boundary. The second case is incompatible with the condition \(\frac{\partial w}{\partial \nu}(x_0) = 0\), since by the above discussion, $\partial \mathcal{H}_{a_0}$ is tangent to $\Sigma$ at $x_0$. Thus, applying the reasoning for the interior point situation above leads to a contradiction.
\end{proof}

Combining the above two propositions, we get the following corollary.

\begin{corollary}\label{hmcpositve}
    Let \(\Sigma\subset \overline{\mathbf{W}}\) be a smooth
    compact embedded anisotropic \(\bm{\omega}_0\)-capillary hypersurface.
    Assume there is a constant vector \(\bm{k}^F\) satisfying \eqref{condition:kf}.
    In the first case of Lemma \ref{lem:subcase},
    there exists a point \(x\in\Sigma\) at which the anisotropic principal curvatures are positive.
    In the second case of Lemma \ref{lem:subcase}, if the anisotropic mean curvature $H_1^F$ is constant, then \(H_1^F>0\).
\end{corollary}

Now we get the Alexandrov-type theorem for the free boundary case as follows.

\begin{proposition}\label{freebdythm}
        Let \(\Sigma\subset\overline{\mathbf{W}}\) be a smooth embedded compact
    anisotropic free boundary hypersurface.  Assume there is a constant vector \(\bm{k}^F\) satisfying \eqref{condition:kf}.
  
  (i) In the first case of Lemma \ref{lem:subcase}, if the anisotropic
    \(r\)-th mean curvature \(H_r^F\) is constant for some \(r\in\{1,..., n\}\),
    then \(\Sigma\) is an anisotropic free boundary truncated Wulff shape.

 (ii)  In the second case of Lemma \ref{lem:subcase}, 
    if the anisotropic
    mean curvature \(H_1^F\) is constant,
    then \(\Sigma\) is an anisotropic free boundary truncated Wulff shape.
\end{proposition}

\begin{proof}
In the first case of Lemma \ref{lem:subcase}, according to Proposition \ref{ellipticpoint}, there exists a point \(x\in\Sigma\)  where the principal curvatures \(\kappa_1^F,\ldots,\kappa_n^F\) are all positive.
Thus, \(H_r^F\) is a positive constant. 
It follows from G\r{a}rding's result \cite{Montiel-Ros} that \(H_k^F>0\) for \(1\leq k\leq r\).
Applying Theorem \ref{freeHK} and using the Maclaurin inequality $H_1^F \geq\left(H_r^F\right)^{1 / r}$ with $H_r^F$ being constant, we have
\begin{equation}\label{eq:crmc}
(n+1)\left(H_r^F\right)^{1 / r}|\Omega| \leq\left(H_r^F\right)^{1 / r} \int_{\Sigma} \frac{F(\nu)}{H_1^F}\, \mathrm{d} A  \leq \int_{\Sigma}F(\nu)\, \mathrm{d} A .
\end{equation}
On the other hand, employing Theorem \ref{mink} and the Maclaurin inequality $H_{r-1}^F \geq\left(H_r^F\right)^{\frac{r-1}{r}}$, we obtain
$$
\begin{aligned}
0 & =\int_{\Sigma} \left(H_{r-1}^F F(\nu)-H_r^F\langle x, \nu\rangle \right)\, \mathrm{d} A 
\geq  \int_{\Sigma}\left(\left(H_r^F\right)^{\frac{r-1}{r}} F(\nu)-H_r^F\langle x, \nu\rangle \right)\mathrm{d} A \\
& =\left(H_r^F\right)^{\frac{r-1}{r}} \int_{\Sigma} F(\nu) \mathrm{d} A - (n+1)H_r^F |\Omega|.\\
\end{aligned}
$$
Thus, equality in \eqref{eq:crmc}   
holds, and by  Theorem \ref{freeHK}, $\Sigma$ is an anisotropic free boundary truncated Wulff shape.
The proof for the second case of Lemma \ref{lem:subcase} follows the same steps as the first case,  with \(r\) set to \(1\).
\end{proof}

\begin{proof}[Proof of Theorem \ref{capialex}]
Similar to the proof of Theorem \ref{capiHKineq}, we consider $$\bar{F}(\xi):=F(\xi)-\langle \xi, \bm{k}^{F} \rangle.$$
Then 

(i) $\bm{\nu}^{\bar{F}}=\bm{\nu}^{F}-\bm{k}^{F};$

(ii) $\bar{F}$ is a gauge with $\nabla^2\bar{F}(x)+\bar{F}(x)\sigma>0 \text{ for } x\in \mathbb{S}^{n};$

(iii) $H_r^{\bar{F}}=H_r^{F}.$

Hence, $\langle \bm{\nu}^{F},  \bm{n}_{i} \rangle=\omega_0^{i} \text{ on }\partial \Sigma$ and $\langle \bm{k}^{F} , \bm{n}_{i} \rangle=\omega_{0}^{i}$
implies that the free boundary condition $$\langle \bm{\nu}^{\bar{F}}, \bm{n}_{i} \rangle=\langle \bm{\nu}^{F},  \bm{n}_{i} \rangle-\langle k ^{F},\bm{n}_{i} \rangle
= 0$$
holds. This indicates that, with respect to the gauge \(\bar{F}\), \(\Sigma\) is an embedded compact anisotropic free boundary hypersurface within the wedge.
Additionally, in the first case outlined in Lemma \ref{lem:subcase}, the constant \(H^F_r\) is positive, and in the second case, the constant \(H^F_1\) is positive.

Thus, from Proposition $\ref{freebdythm}$ and $$\bar{\Phi}(\mathbb{S}^{n})+\bm{k}^{F}=\Phi(\mathbb{S}^{n}),$$ 
we obtain $\Sigma$ is an anisotropic $\bm{\omega}_{0}$-capillary truncated Wulff shape associated with $F$.  
\end{proof}  

\begin{remark}
    The above technique can be used to prove the Heintze-Karcher type inequality and Alexandrov-type theorem for anisotropic capillary hypersurfaces in some types of circular cones.
\end{remark}

\begin{proof}[Proof of Corollary \ref{noexist}]

By Corollary \ref{hmcpositve}, in the first case of Lemma \ref{lem:subcase}, it is established that \( H^F_r \) is a positive constant, and in the second case of Lemma \ref{lem:subcase}, \( H^F_1 \) is a positive constant.

Following the reasoning in the proof of Proposition  \ref{freebdythm}, we can combine Theorem \ref{mink} with Theorem \ref{capiHKineq} to deduce that the equality condition in the Heintze-Karcher inequality \eqref{hkineq} is satisfied. Consequently, \( \Sigma \) must be an \( \omega_0 \)-truncated Wulff shape. However, this conclusion conflicts with the condition \( \Sigma \cap L = \varnothing \) as stated in Lemma \ref{existkF}.
\end{proof}

%%%%%%%%%%%%%%%%
\begin{ack}
The authors would like to thank Professors Chao Qian, Chao Xia, Qiaoling Xia, Zhenxiao Xie and Ge Xiong for their helpful comments.
The authors were partially supported by the 
National Natural Science Foundation of China (grant numbers 12471048, 11831005 and 12061131014).
\end{ack}

\end{document}